\renewcommand{\SS}{\mathcal S}
\newcommand{\FF}{\mathcal{F}}
\newcommand{\DD}{\mathcal{D}}
\newcommand{\Z}{{\mathbb Z}}
\newcommand{\R}{{\mathbb R}}
\newcommand{\C}{{\mathbb C}}
\newcommand{\II}{{\mathbb I}}
\newcommand{\T}{{\mathbb T}}  
\newcommand{\sgn}{\operatorname{sgn}}
\renewcommand{\th}{\ensuremath{\theta}}
\newcommand{\lam}{\ensuremath{\lambda}}
\renewcommand{\a}{\ensuremath{\alpha}}
\newcommand{\g}{\ensuremath{\gamma}}
\renewcommand{\b}{\ensuremath{\beta}}
\newcommand{\eps}{\ensuremath{\varepsilon}}
\newcommand{\e}{\ensuremath{\varepsilon}}
\newcommand{\hatH}{\widehat{H}}
\newcommand{\hatF}{\widehat{F}}
\newcommand{\hatI}{\widehat{I}}
\renewcommand{\(}{\left(}
\renewcommand{\)}{\right)}
\newcommand{\pfrac}[2]{\(\frac{#1}{#2}\)}
\newcommand{\ds}{\displaystyle}
\def\nts{\negthickspace}
\newcommand{\RHF}{RH$_F$}             
\newcommand{\be}{\begin{equation}}
\newcommand{\ee}{\end{equation}}
\newcommand{\benn}{\begin{equation*}} 
\newcommand{\eenn}{\end{equation*}}
\newcommand{\ms}{\medskip}
\numberwithin{equation}{section}
\begin{document}

\title
{On the distribution of imaginary parts of zeros
of the Riemann zeta function, II}

\titlerunning{Imaginary parts of zeros of $\zeta(s)$, II}

\author{Kevin Ford \thanks{The first author is supported by 
National Science Foundation Grant DMS-0555367} \and 
K. Soundararajan \thanks{The second author is partially 
supported by the National 
Science Foundation and the American Institute of Mathematics (AIM)} \and
Alexandru Zaharescu \thanks{The third author is supported by
 National Science Foundation Grant DMS-0456615}}

\institute{\textsc{Kevin Ford and Alexandru Zaharescu} \at 
Department of Mathematics, 1409 West Green Street, University
of Illinois at Urbana-Champaign, Urbana, IL 61801, USA \and 
\textsc{K. Soundararajan} \at
 Department of Mathematics, 450 Serra Mall, Bldg. 380, Stanford 
University, Stanford, CA 94305, USA}


\maketitle

\begin{abstract}
\subclass{Primary 11M26; Secondary 11K38}
We continue our investigation of the distribution of the fractional 
parts of $\a \g$, where
$\a$ is a fixed non-zero real number and $\g$ runs over the imaginary
parts of the non-trivial zeros of the Riemann zeta function.  
We establish 
some connections to Montgomery's 
pair correlation function and the distribution of
primes in short intervals.  We also discuss analogous results for a
more general $L$-function.
\keywords{Riemann zeta function -- zeros, fractional parts -- primes in
  short intervals -- pair correlation functions}
\end{abstract}

%
\section{Introduction and Statement of Results}
%

In this paper we continue the study of 
the distribution of the fractional parts $\{\a \g\}$ 
initiated by the first and third authors in \cite{FZ}, where $\a$ is 
a fixed positive real number and $\g$ runs over the positive 
ordinates of zeros of the Riemann zeta function $\zeta(s)$.  
We extend and generalize the results from \cite{FZ} in several
directions, establishing connections between these fractional parts,
the pair correlation of zeros of $\zeta(s)$ and the distribution of
primes in short intervals.
It is known \cite{Hl} that for any fixed $\a$, 
the fractional parts $\{\a \g\}$ are uniformly distributed $\pmod 1$.  
That is, for all continuous functions 
$f:\T \to \C$, as $T\to \infty$  we have 
\be\label{uniform}
\sum_{0< \g \le T} f(\a\g)  = N(T) \int_\T f(x) dx
+ o(N(T)).
\ee
Here $\T$ is the torus $\R/\Z$ and $N(T)$ denotes 
the number of ordinates $0 < \g \le T$; it is 
well-known that 
\be\label{NT}
N(T) = \frac{T}{2\pi} \log \frac{T}{2\pi e} +O(\log T).
\ee  
We are interested in the lower order terms in the asymptotic 
\eqref{uniform}.  For a general 
continuous function $f$ the asymptotic \eqref{uniform} can be attained
arbitrarily slowly so 
that no improvement of the error term there is possible.  
But if we assume that $f$ has 
nice smoothness properties then we can isolate a second main term of 
size about $T$.  
More precisely, we define the function $g_{\a}:\T \to \C$ as follows.  If 
$\a$ is not a rational multiple of $\frac{\log p}{2\pi}$ 
for some prime $p$, then 
$g_{\a}$ is identically zero.  If $\a= \frac{a}{q} \frac{\log p}{2\pi} 
$ for some 
rational number $a/q$ with $(a,q)=1$ then we set 
\be\label{galpha}
g_{\a}(x) = -\frac{\log p}{\pi} \Re \sum_{k=1}^{\infty} 
\frac{e^{-2\pi i qkx}}{p^{ak/2}} = - \frac{(p^{a/2} \cos 2\pi q x -
  1)\log p}{\pi(p^a - 2p^{a/2}\cos 2\pi q x + 1)}.
\ee
Then, we expect (for suitable $f$) that as $T\to \infty$ 
\be\label{FZ}
\sum_{0< \g \le T} f(\a \g) = N(T) \int_\T f(x) dx + 
T \int_\T f(x) g_{\a}(x) dx + o(T).
\ee
As remarked above, certainly \eqref{FZ} does not hold for all
continuous functions $f$.  
In Corollary 2 of \cite{FZ}, it is shown that \eqref{FZ} holds for 
all $f\in C^2 (\T)$, and if the Riemann Hypothesis
(RH) is true then \eqref{FZ} holds for all absolutely 
continuous functions $f$ (see Corollary 5 there).  
Moreover it is conjectured 
there (see Conjecture A there) that \eqref{FZ} does hold when $f$ is the 
characteristic function of an interval in $\T$. 

%
%

\begin{conjecture}\label{conj1}
Let $\II$ be an interval of $\T$.  Then 
$$
\sum_{\substack{ 0< \g \le T \\ \{\a \g\} \in \II}} 
1 = |\II| N(T) + T \int_{\II} g_{\a}(x) dx + o(T),
$$
uniformly in $\II$.
\end{conjecture}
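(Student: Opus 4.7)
The strategy is to sandwich $\chi_{\II}$ between Beurling--Selberg trigonometric polynomials and apply a quantitative refinement of the smooth asymptotic \eqref{FZ}. For a parameter $M = M(T)$ to be chosen, let $F^{\pm}_M$ be trigonometric polynomials of degree $\le M$ with $F^-_M \le \chi_{\II} \le F^+_M$ pointwise and $\int_{\T}(F^+_M - F^-_M)\,dx \ll 1/M$. Applying these to the sum on the left-hand side of the conjecture and to the proposed asymptotic on the right loses only $O(N(T)/M)$ on each side, which is $o(T)$ as soon as $M \to \infty$. The problem thus reduces to establishing \eqref{FZ} uniformly for the family of trigonometric polynomials $F$ of degree $\le M$ and sup-norm $\ll 1$, with an error term $o(T)$ independent of $F$ in the family.

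To obtain such a uniform version I would expand $F(x) = \sum_{|n|\le M} \fh(n) e(nx)$ and rewrite the zero sum as $\fh(0)\,N(T) + \sum_{1 \le |n| \le M} \fh(n)\, S(n,T)$, where
$$
S(n,T) := \sum_{0 < \g \le T} e^{2\pi i n \a \g}.
$$
Landau's formula, in Gonek's uniform version, tells us that $S(n,T)$ has a leading term of order $T$ precisely when $e^{2\pi n \a}$ is a prime power, namely when $\a = (k/n)(\log p)/(2\pi)$ for some prime $p$ and integer $k$. Summing these leading contributions against $\fh(n)$ reconstructs the arithmetic main term $T \int_{\T} F(x) g_{\a}(x)\,dx$ of \eqref{FZ}, leaving the error
$$
E_F(T) := \sum_{1 \le |n| \le M} \fh(n)\bigl(S(n,T) - \text{main term}\bigr),
$$
which we must show is $o(T)$ uniformly in $F$.

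The chief obstacle is the control of $E_F(T)$. Gonek's pointwise bound $|S(n,T) - \text{main term}| \ll n\log^2(nT)$, combined with $|\fh(n)| \ll \min(|\II|, 1/|n|)$, produces an estimate far larger than $T$, so term-by-term control is hopeless. Instead I would apply Cauchy--Schwarz together with a mean-square estimate of the form
$$
\sum_{1 \le |n| \le M} |S(n,T) - \text{main term}|^2 \ll T M\log^A T,
$$
which combined with $\sum_{n} |\fh(n)|^2 \ll |\II|$ would give $|E_F(T)|^2 \ll |\II|\,T M\log^A T$. Choosing $M$ to grow faster than $\log T$ but slower than $T/(|\II|\log^A T)$ then makes both the Selberg truncation loss and $E_F(T)$ of size $o(T)$.

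The required second-moment estimate for $S(n,T)$ is precisely the point at which Montgomery's pair correlation conjecture enters the picture, and its availability in sufficient strength is the main obstacle to completing the proof. Even granting RH, proving an on-average bound like the displayed one appears to require at least a quantitative form of Montgomery's conjecture; this is why Conjecture \ref{conj1} is linked, in the abstract, to pair correlation and to the distribution of primes in short intervals. A clean unconditional proof therefore seems out of reach, but the scheme above should yield the conjecture conditionally on RH together with a Montgomery-type second-moment estimate for $S(n,T)$.
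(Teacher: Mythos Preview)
First, note that this statement is labelled a \emph{conjecture}: the paper does not prove it. What the paper establishes are conditional versions (Theorems~\ref{theorem1}(ii), \ref{123}, \ref{theoremshort}, \ref{theoremPC}), and your outline is precisely the framework used there: sandwich $\chi_{\II}$ between Beurling--Selberg trigonometric polynomials $h^{\pm}$ of degree $J$, expand, and apply the uniform Landau formula \eqref{Landau} to each frequency. The Selberg loss is $N(T)/(J+1)$, which is $o(T)$ once $J/\log T\to\infty$; the Landau error is handled term by term for $|j|\le J_0\asymp \log T$; and the frequencies $J_0<|j|\le J$ are exactly the obstruction you identify.

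Two corrections. First, your quoted Landau--Gonek error ``$\ll n\log^2(nT)$'' is wrong: the error in \eqref{Landau} is $\ll x\log^2(Tx)$ with $x=e^{2\pi n\alpha}$, hence \emph{exponential} in $n$, and on RH one has $|S(n,T)-\text{main}|\ll e^{\pi |n|\alpha}\log^2 T$. This is why the cutoff $J_0$ sits near $(\log T)/(\pi\alpha)$. Second, the conditional input the paper actually uses is not a second-moment bound but the \emph{pointwise} estimate of Conjecture~\ref{conj3}: $\sum_{0<\g\le T} x^{i\g}=o(T)$ uniformly for $T^2/(\log T)^5\le x\le T^A$. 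Since $|c_j^{\pm}|\le 1/|j|$, one gets directly $\sum_{J_0<|j|\le J}|c_j^{\pm}|\,|S(j,T)|=o(T)$ for each fixed $J=\lfloor\lambda\log T\rfloor$, and then lets $\lambda\to\infty$. The link to pair correlation is made through a pointwise inequality of Goldston--Heath-Brown (Lemma~\ref{PCY}) bounding $|\sum x^{i\g}|$ in terms of $\FF(x,T)$; no averaging over $n$ enters.

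Your alternative hypothesis $\sum_{|n|\le M}|S(n,T)|^2\ll TM\log^A T$ is a genuinely different, and on its face stronger, assumption: it demands $|S(n,T)|\ll T^{1/2}\log^{A/2}T$ \emph{on average} over the exponentially sparse points $x_n=e^{2\pi n\alpha}$, whereas Conjecture~\ref{conj3} only asks for $o(T)$ pointwise. While expanding $\sum_n|S(n,T)|^2$ does produce a sum over pairs of zeros, it is not Montgomery's $\FF(x,T)$ in any direct sense, so the claim that this is ``precisely'' where pair correlation enters overstates the connection. The paper's route---a pointwise $o(T)$ bound, deduced either from primes in short intervals (Theorem~\ref{theoremshort}) or from a strong pair correlation conjecture via Lemma~\ref{PCY} (Theorem~\ref{theoremPC})---is both a weaker hypothesis and more transparently tied to the standard conjectures.
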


We define the discrepancy of the sequence $\{\a \g\}$ (for $0<\g \le T$) 
as
$$
D_{\a}(T) = \sup_{\II} \Big| \frac{1}{N(T)} \sum_{\substack{ 
0 < \g \le T \\ \{\a \g\} \in \II}} 1 - |\II|\Big|,
$$
where the supremum is over all intervals $\II$ of $\T$.  
Unconditionally, Fujii \cite{F76} proved that $D_\a(T) \ll
\frac{\log\log T}{\log T}$ for every $\a$.  On RH, Hlawka \cite{Hl}
  showed that $D_\a(T) \ll \frac{1}{\log T}$, which is best possible
  for $\a$  of the form $\frac{a}{q} \frac{\log p}{2\pi}$ (\cite{FZ},
  Corollary 3).  Conjecture 1 
clearly implies the following conjecture for the discrepancy (see
Conjecture A and Corollary 6 of \cite{FZ}).

%
%

\begin{conjecture}\label{conj2} 
We have 
$$
D_{\a}(T) = \frac{T}{N(T)} \sup_{\II} \Big| \int_{\II} g_{\a}(x) dx \Big| 
+ o\Big(\frac{1}{\log T} \Big).
$$
\end{conjecture}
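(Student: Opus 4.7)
The plan is to derive Conjecture \ref{conj2} directly from Conjecture \ref{conj1}; since the latter already contains an asymptotic with $o(T)$ \emph{uniformly in} $\II$, the implication should be essentially formal, amounting to rearranging the identity and taking the supremum through an inequality.

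First I would subtract $|\II|N(T)$ from both sides of the asymptotic in Conjecture \ref{conj1} and divide by $N(T)$. Combined with \eqref{NT}, which gives $N(T) = \frac{T}{2\pi}\log T + O(T)$ and hence $T/N(T) \asymp 1/\log T$, this produces, for every interval $\II \subset \T$,
\be\label{plan-main}
\frac{1}{N(T)} \sum_{\substack{0 < \g \le T \\ \{\a\g\} \in \II}} 1 \;-\; |\II| \;=\; \frac{T}{N(T)} \int_\II g_\a(x)\, dx \;+\; E(T,\II),
\ee
where $E(T,\II) = o(1/\log T)$ \emph{uniformly} in $\II$.

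Next I would take absolute values in \eqref{plan-main} and apply the two-sided triangle inequality
\[
\Big|\tfrac{T}{N(T)} \textstyle\int_\II g_\a\Big| - |E(T,\II)| \;\le\; \Big|\tfrac{T}{N(T)} \textstyle\int_\II g_\a + E(T,\II)\Big| \;\le\; \Big|\tfrac{T}{N(T)} \textstyle\int_\II g_\a\Big| + |E(T,\II)|.
\]
Taking the supremum over $\II$ on both sides, and using that $\sup_\II |E(T,\II)| = o(1/\log T)$ by the uniformity, yields
\[
D_\a(T) \;=\; \frac{T}{N(T)} \sup_\II \Big| \int_\II g_\a(x)\, dx \Big| \;+\; o\!\left(\frac{1}{\log T}\right),
\]
which is exactly the statement of Conjecture \ref{conj2}.

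There is not really a substantial obstacle inside this implication itself: the entire content of the argument is that uniformity in $\II$ lets the $o(T)$ error survive intact when passing to the supremum. The only place one has to be slightly careful is in checking that the lower bound in the triangle inequality, rather than just the upper one, is compatible with the supremum; this is handled by the elementary fact that $\sup_\II(a(\II) - b(\II)) \ge \sup_\II a(\II) - \sup_\II b(\II)$ applied to $a(\II) = \frac{T}{N(T)}|\int_\II g_\a|$ and $b(\II) = |E(T,\II)|$. The genuinely hard work, of course, lies in Conjecture \ref{conj1} itself, which is taken for granted here.
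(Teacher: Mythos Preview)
Your derivation is correct and matches the paper's own treatment: the paper simply asserts that ``Conjecture \ref{conj1} clearly implies'' Conjecture \ref{conj2} (referring back to Conjecture A and Corollary 6 of \cite{FZ}) without writing out any details, and what you have done is precisely the routine verification of that implication via the uniform $o(T)$ error and the triangle inequality. There is nothing to add.
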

 
Even assuming RH, we are unable to 
establish Conjectures \ref{conj1} and \ref{conj2}.  
We show here some weaker results towards these conjectures, 
and how these conjectures 
would follow from certain natural assumptions on the zeros of 
$\zeta(s)$, or the distribution of prime numbers. 


\begin{theorem}\label{theorem1}
(i) We have unconditionally
$$
D_{\a}(T) \ge \frac{T}{N(T)}  \sup_{\II} \Big| \int_{\II} g_{\a}(x) dx \Big| 
 + o\Big(\frac{1}{\log T} \Big).
 $$

(ii) Assuming RH, for any interval $\II$ of $\T$ we have 
$$
\Big| \sum_{\substack{ 0<\g \le T \\ \{\a \g\} \in \II}} 1 
- |\II| N(T) - T\int_{\II} g_{\a}(x) dx \Big| \le  \frac{\a}{2} T + o(T).
$$

\end{theorem}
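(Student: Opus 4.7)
The two parts require different techniques, so I treat them separately.

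\medskip
\emph{Part \textup{(i)}.}
If $\a$ is not of the form $\frac{a}{q}\frac{\log p}{2\pi}$ then $g_\a\equiv 0$ and the claimed inequality reduces to the trivial bound $D_\a(T)\ge o(1/\log T)$. Assume therefore $\a = \frac{a}{q}\frac{\log p}{2\pi}$, so that the function $g_\a$ of \eqref{galpha} is a fixed, continuous, bounded, mean-zero function on $\T$. The mapping $\II\mapsto\int_\II g_\a(x)\,dx$ is then continuous, and its supremum is attained at a fixed interval $\II^*=\II^*(\a)$ whose endpoints are zeros of $g_\a$. The asserted lower bound on $D_\a(T)$ follows immediately from the single-interval asymptotic
\benn
\sum_{\substack{0<\g\le T\\ \{\a\g\}\in\II^*}} 1 = |\II^*|N(T) + T\!\int_{\II^*}\!g_\a(x)\,dx + o(T).
\eenn
To prove this, I would sandwich $\chi_{\II^*}$ between $C^2(\T)$ approximations $f_\eps^-\le \chi_{\II^*}\le f_\eps^+$ with $\int_\T(f_\eps^+ - f_\eps^-)\,dx \le C\eps$ and explicit bounds $\|f_\eps^\pm\|_{C^2} = O(\eps^{-2})$, apply the unconditional $C^2$-asymptotic \eqref{FZ} (Corollary~2 of \cite{FZ}) to each of $f_\eps^\pm$, sandwich the zero-counting sum, and let $\eps=\eps(T)\to 0$ slowly with $T$ so as to absorb both the smoothing contribution $O(\eps N(T))$ and the error from Corollary~2 simultaneously into $o(T)$. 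The principal obstacle is that Corollary~2 of \cite{FZ} is phrased for a \emph{fixed} $C^2$ function, so the balancing of $\eps$ against $T$ requires revisiting that proof and extracting a quantitative dependence of the error on a smoothness norm of $f$ (such as $\|f''\|_\infty$).

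\medskip
\emph{Part \textup{(ii)}.}
Here I would sandwich $\chi_\II$ between trigonometric polynomials $M_K^\pm$ of degree at most $K$ via a symmetric Beurling--Selberg--Vaaler extremal construction, so that $M_K^-\le\chi_\II\le M_K^+$ on $\T$ and
\benn
\int_\T\bigl(M_K^+(x) - \chi_\II(x)\bigr)\,dx = \int_\T\bigl(\chi_\II(x) - M_K^-(x)\bigr)\,dx = \frac{1}{2(K+1)}.
\eenn
These polynomials bracket the count of interest,
\benn
\sum_{0<\g\le T}M_K^-(\a\g) \le \sum_{\substack{0<\g\le T\\\{\a\g\}\in\II}}\!\!1 \le \sum_{0<\g\le T}M_K^+(\a\g),
\eenn
and expanding into Fourier polynomials gives
\benn
\sum_{0<\g\le T}M_K^\pm(\a\g) = \widehat{M_K^\pm}(0)\,N(T) + \sum_{0<|k|\le K}\widehat{M_K^\pm}(k)\,X_k(T),
\eenn
with $X_k(T):=\sum_{0<\g\le T}e^{2\pi ik\a\g}$. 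Under RH, Landau's formula in the uniform form due to Gonek gives $X_{qm}(T) = -\frac{T\log p}{2\pi p^{am/2}} + O(\log T)$ when $k=qm$, and $X_k(T) = O(p^{ak/(2q)}\log^2 T)$ when $q\nmid k$. Summing the $q\mid k$ contributions against the Fourier coefficients of $M_K^\pm$ reconstructs the secondary main term $T\int_\II g_\a(x)\,dx$ through the identity implicit in \eqref{galpha}, with a rapidly-decaying tail from $|m|>K/q$. The dominant residual error is the zeroth-coefficient mismatch $|\widehat{M_K^\pm}(0)-|\II||\cdot N(T) = N(T)/(2(K+1))$, and the choice
\benn
K+1 = \frac{N(T)}{\a T} = \frac{\log T}{2\pi\a} + O(1)
\eenn
converts this precisely into $\frac{\a}{2}T + o(T)$, while simultaneously keeping $p^{aK/(2q)}\approx T^{1/2}$ so that the Gonek errors from $q\nmid k$ contribute only $O(T^{1/2}(\log T)^{3})=o(T)$.

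The main obstacle is the careful bookkeeping of the Fourier coefficients $\widehat{M_K^\pm}(k)$ of the extremal polynomials: one needs either a construction in which these match the target coefficients $c_k=\int_\II e^{-2\pi ikx}\,dx$ closely enough that the main term $T\int_\II g_\a$ is accurately reconstructed on both sides of the sandwich, or else a separate argument (e.g.\ a mean-value estimate across $|k|\le K$) showing that the resulting mismatch contributes at most $o(T)$.
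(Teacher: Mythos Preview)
Your sandwich approach hits a genuine obstruction, as you yourself note: the smoothing error is $O(\eps N(T))=O(\eps T\log T)$, so one needs $\eps=o(1/\log T)$, but then the $o(T)$ from Corollary~2 of \cite{FZ} applied to $f_\eps^\pm$ (whose $C^2$ norm blows up like $\eps^{-2}$) may no longer be $o(T)$. The paper sidesteps this completely by \emph{not} trying to prove the single-interval asymptotic for $\II^*$. Instead it convolves $\chi_\II$ with a fixed smooth bump $h_\eps$ supported on $[0,\eps]$ to produce a smooth $f$, applies \eqref{FZ} to this fixed $f$, and then observes that
\[
\sum_{0<\g\le T} f(\a\g)=\int_0^\eps h_\eps(y)\Bigl(\sum_{\substack{0<\g\le T\\ \{\a\g\}\in\II+y}}1\Bigr)\,dy
\]
is a weighted average of interval counts over \emph{translates} $\II+y$. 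Since $|\II+y|=|\II|$ for every $y$, the term $N(T)|\II|$ cancels exactly across all translates, and one is left comparing an average of discrepancies to $T\int_\II g_\a + O(\eps T)+o_\eps(T)$, where only $T$ (not $N(T)$) appears. A mean-value argument gives some $y$ with discrepancy at least $T|\int_\II g_\a|-C\eps T-o_\eps(T)$; letting $T\to\infty$ first and then $\eps\to 0$ finishes the proof with no quantitative dependence on $\eps$ needed.

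\textbf{Part (ii).} Your argument is essentially the paper's, but it contains a factor-of-two slip: the optimal one-sided trigonometric majorant and minorant of $\chi_\II$ of degree $K$ satisfy $\widehat{M_K^\pm}(0)=|\II|\pm\frac{1}{K+1}$, not $|\II|\pm\frac{1}{2(K+1)}$, and this is sharp (see Chapter~1 of \cite{M2}). With the correct constant your choice $K+1\approx\frac{\log T}{2\pi\a}$ yields an error of $\a T$, not $\frac{\a}{2}T$. The paper takes $J_0=\bigl\lfloor\frac{2\log T-5\log\log T}{2\pi\a}\bigr\rfloor$, roughly twice your $K$, so that $\frac{N(T)}{J_0+1}\sim\frac{\a}{2}T$; this still keeps $x_{J_0}^{1/2}\ll T/(\log T)^{5/2}$, so the Landau errors summed with weights $|c_j^\pm|\le 1/|j|$ remain $o(T)$. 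Your ``main obstacle'' about reconstructing the secondary term is in fact harmless: since $h^\pm-\chi_\II$ has constant sign with $L^1$-norm $\frac{1}{J+1}$ and $g_\a$ is bounded, $\bigl|\int_\T(h^\pm-\chi_\II)g_\a\bigr|\ll\frac{1}{J+1}\ll\frac{1}{\log T}$, whence $T\int_\T h^\pm g_\a = T\int_\II g_\a + o(T)$ directly.
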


The left side of \eqref{uniform} depends strongly on the behavior of
the sums $\sum_{0<\g \le T} x^{i\g}$.  

\begin{conjecture}\label{conj3}
Let $A >1$ be a fixed real number.  Uniformly for all 
$\frac{T^2}{(\log T)^{5}} \le x\le T^{A}$ we have
\be\label{sumxg}
\sum_{0<\g \le T} x^{i\g} = o(T). 
\ee
\end{conjecture}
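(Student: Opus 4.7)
The plan is to reduce Conjecture~\ref{conj3} to an estimate for prime powers in short intervals via an explicit formula.  Assuming RH, Gonek's uniform version of Landau's formula gives
$$\sum_{0<\g\le T} x^{i\g} = -\frac{T\Lambda(x)}{2\pi\sqrt{x}} + O\bigl(\sqrt{x}\,\log^2(xT)\bigr).$$
Throughout the stated range we have $\sqrt{x}\ge T/(\log T)^{5/2}$ and $\log x \ll \log T$, so the main term is $\ll \log x \cdot (\log T)^{5/2} \ll (\log T)^{7/2} = o(T)$.  The error term is $o(T)$ as soon as $x = o(T^2/\log^4 T)$, so Gonek's bound already handles the extreme lower end of the range; however it breaks down for $x$ much larger than $T^2$, and in particular gives nothing when $x$ is of size $T^A$ with $A>2$.

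For larger $x$ I would use partial summation.  Writing $N(t)=\frac{t}{2\pi}\log\frac{t}{2\pi e} + S(t) + O(1)$, where $S(t)=\pi^{-1}\arg\zeta(\tfrac12+it)$, one obtains
$$\sum_{0<\g\le T} x^{i\g} = x^{iT} N(T) - i\log x\int_0^T N(t)\, x^{it}\, dt.$$
Integration by parts shows that the boundary term cancels against the contribution of the smooth part of $N(t)$ up to an error of $O(\log T)$, so, using $\log x \asymp \log T$ in our range, the conjecture reduces to proving
$$\int_0^T S(t)\, x^{it}\, dt = o\bigl(T/\log T\bigr).$$
Inserting Selberg's approximation for $S(t)$ as an oscillatory sum $\sum_{n \le Y} a(n)\, n^{-it}$ supported on prime powers with weights $a(n) = \Lambda(n)/(\sqrt{n}\log n)$ (and a suitable cutoff $Y$), the integral decomposes into a linear combination of oscillatory integrals; by stationary-phase considerations, the dominant contribution comes from those $n$ with $|\log(n/x)| \ll 1/T$, i.e.\ from prime powers in an interval of length $\asymp x/T$ around $x$.

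The main obstacle is therefore an equidistribution statement for prime powers in short intervals.  In the range $x\ge T^2/(\log T)^5$ this interval has length $h \asymp x/T \ge \sqrt{x}/(\log x)^{5/2}$, and one needs an estimate of roughly the form
$$\psi(x+h) - \psi(x) = h + o(h)$$
uniformly in the stated range.  Such short-interval estimates appear to be of comparable depth to Montgomery's pair correlation conjecture (to which the paper hints a connection), and lie beyond what is known unconditionally or on RH alone.  I therefore expect Conjecture~\ref{conj3} to follow from, and likely be essentially equivalent to, a strong short-interval hypothesis for primes or a pair correlation hypothesis for the zeros; a proof with no new input on the fine-scale distribution of primes or zeros seems unlikely.
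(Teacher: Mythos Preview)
The statement you were given is Conjecture~\ref{conj3}; the paper does not prove it, and you correctly recognize that no proof is available without an extra hypothesis on primes in short intervals or on pair correlation of zeros. In that sense your conclusion matches the paper exactly: the paper treats \eqref{sumxg} as an open conjecture, shows (Theorem~\ref{theoremshort}) that it follows on RH from Conjecture~\ref{primeshort} on $\psi(x+y)-\psi(x)$ and that it in turn implies a slightly weaker short-interval statement, and shows (Theorem~\ref{theoremPC}) that it follows on RH from the strong pair correlation Conjecture~\ref{conjPC}.

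Where your sketch differs from the paper is in the mechanism linking $\sum_{0<\gamma\le T} x^{i\gamma}$ to primes near $x$. You pass through partial summation against $N(t)$, reduce to $\int_0^T S(t)\,x^{it}\,dt$, and then invoke Selberg's approximate formula for $S(t)$. The paper instead replaces the indicator $I_{[0,1]}(\gamma/T)$ by a Beurling--Selberg-type function $F$ whose Fourier transform is supported on $[-K,K]$, expresses the smoothed sum as an integral against $\hatF$, and inserts the explicit formula \eqref{explicit} for $\psi(x)-x$ directly; this produces a clean bound in terms of $\max_{|y|\ll Kx/T} |G(x+y)-G(x)|$ with $G(x)=\psi(x)-x$. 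The paper's route avoids the delicate handling of the $O(1)$ term in the Riemann--von Mangoldt formula (which in your approach would naively cost $O(T\log x)$ after multiplying by $\log x$) and gives an explicit, uniform reduction. Your stated needed input ``$\psi(x+h)-\psi(x)=h+o(h)$ with $h\asymp x/T$'' is in the right direction but is not uniform across the range: at the bottom end $h\asymp \sqrt{x}/(\log x)^{5/2}$, whereas for $x=T^A$ one has $h=x^{1-1/A}$; the paper's Conjecture~\ref{primeshort} formulates the required input uniformly as $\psi(x+y)-\psi(x)=y+o(x^{1/2}/\log\log x)$ for all $y\le x^{1-\epsilon}$.
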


\begin{theorem}\label{123}
Assume RH.  Then Conjecture \ref{conj3} implies Conjectures \ref{conj1} and
\ref{conj2}.
\end{theorem}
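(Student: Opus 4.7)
The plan is to extend the Beurling--Selberg sandwich argument that underlies Theorem~\ref{theorem1}(ii), using Conjecture~\ref{conj3} to handle Fourier modes of $\mathbf{1}_\II$ that lie beyond the reach of the Landau explicit formula. I will use the standard real trigonometric polynomials $S_K^{\pm}$ of degree $\le K$ satisfying $S_K^-\le\mathbf{1}_\II\le S_K^+$ on $\T$, with Fourier coefficients $\hat S_K^{\pm}(0)=|\II|+O(1/K)$, $|\hat S_K^{\pm}(k)|\le 1/(\pi|k|)$ for $0<|k|\le K$, and $\hat S_K^{\pm}(k)=0$ for $|k|>K$. Then
$$
\sum_{0<\g\le T}S_K^{\pm}(\a\g)=\hat S_K^{\pm}(0)\,N(T)+\sum_{0<|k|\le K}\hat S_K^{\pm}(k)\,S(k),\qquad S(k):=\sum_{0<\g\le T}e(k\a\g),
$$
and the sandwich $S_K^-\le \mathbf{1}_\II\le S_K^+$ reduces Conjecture~\ref{conj1} to estimating the right-hand side. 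Note that $S(k)=\sum_{0<\g\le T}x^{i\g}$ with $x=e^{2\pi k\a}$.

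Fix $A>1$ and take $K=A\log T/(2\pi\a)$, so that $e^{2\pi|k|\a}\le T^A$ throughout the sum. The constant term contributes $|\II|N(T)+O(N(T)/K)=|\II|N(T)+O(T/A)$; this is precisely the source of the $\a T/2$ barrier in Theorem~\ref{theorem1}(ii), where $K$ could not exceed roughly $(\log T)/(\pi\a)$. Split the remaining modes at $K_0$ chosen so that $e^{2\pi K_0\a}\asymp T^2/(\log T)^5$. For $0<|k|\le K_0$, the identity $x^{\rho}=\sqrt x\,x^{i\g}$ (valid on RH) combined with the uniform Landau explicit formula should give, uniformly in this range,
$$
S(k)=-\frac{T\,\Lambda(e^{2\pi k\a})}{2\pi\,e^{\pi k\a}}+o(T).
$$
Since $\Lambda(e^{2\pi k\a})\ne 0$ only when $\a=\frac{a}{q}\frac{\log p}{2\pi}$ and $q\mid k$, the resulting geometric series in $p^{-am/2}$ matches exactly the Fourier expansion of $g_\a$ in \eqref{galpha}, so this range contributes $T\int_\II g_\a(x)\,dx+o(T)$, essentially as in the proof of Theorem~\ref{theorem1}(ii). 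For $K_0<|k|\le K$, where $T^2/(\log T)^5\le e^{2\pi|k|\a}\le T^A$, Conjecture~\ref{conj3} yields $S(k)=o(T)$ uniformly; weighting by $1/(\pi|k|)$ and summing over this range contributes at most $o(T)\cdot\log(K/K_0)=o_A(T)$.

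Combining the pieces yields, for every fixed $A>1$ and uniformly in $\II$,
$$
\bigg|\sum_{\substack{0<\g\le T\\ \{\a\g\}\in\II}} 1\;-\;|\II|N(T)\;-\;T\int_\II g_\a(x)\,dx\bigg|\le \frac{T}{A}+o_A(T).
$$
Since $A$ is arbitrary, a standard diagonal argument (letting $A=A(T)\to\infty$ slowly enough in $T$) will deliver Conjecture~\ref{conj1}, and Conjecture~\ref{conj2} then follows at once (cf.\ Corollary~6 of \cite{FZ}). I anticipate the main technical obstacle to be the calibration at the interface $|k|\asymp K_0$: the Landau-type error for $\sum_\g x^{i\g}$ on RH is of order $\sqrt x\,\log(xT)\log\log x$, which is $o(T)$ only just inside the range $x\le T^2/(\log T)^5$, so the explicit-formula regime and the Conjecture~\ref{conj3} regime must be made to dovetail seamlessly, with errors uniform in both $k$ and the interval $\II$.
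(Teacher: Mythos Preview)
Your proposal is correct and follows essentially the same route as the paper: the paper sandwiches $\mathbf{1}_\II$ by Beurling--Selberg polynomials of degree $J=\lfloor \lambda\log T\rfloor$, splits the Fourier modes at $J_0=\lfloor (2\log T-5\log\log T)/(2\pi\a)\rfloor$, handles $|j|\le J_0$ via the Landau formula \eqref{Landau} on RH, handles $J_0<|j|\le J$ via Conjecture~\ref{conj3}, and then lets $\lambda\to\infty$---exactly your scheme with $\lambda$ in place of $A/(2\pi\a)$. One small slip: the RH-Landau error for $\sum_\g x^{i\g}$ from \eqref{Landau} is $O(\sqrt{x}\,\log^2(Tx))$, not $O(\sqrt{x}\,\log(xT)\log\log x)$, though this does not disturb your threshold $x\le T^2/(\log T)^5$.
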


{\bf Remarks}.
Assuming RH, 
\eqref{sumxg} holds for $x\to \infty$ and $x = o (T^2/\log^{4} T)$ as $T\to\infty$  by
uniform versions of Landau's formula for $\sum_{0<\g \le T} x^{\rho}$
\cite{La}.
For example, Lemma 1 of \cite{FZ} implies, for $x>1$ and $T\ge 2$,
that (unconditionally)
\be\label{Landau}
\sum_{0<\g \le T} x^\rho = 
-\frac{\Lambda(n_x)}{2\pi} \frac{e^{iT\log(x/n_x)}-1}
{i\log(x/n_x)}
+ O\( x \log^2 (Tx) + \frac{\log T}{\log x}\),
\ee
where $n_x$ is the nearest prime power to $x$, and the main term is to
be interpreted as $-T \frac{\Lambda(x)}{2\pi}$ if $x=n_x$.  This main
term is always $\ll T \log x$.
On RH, divide both sides of \eqref{Landau} by $x^{1/2}$ to obtain
\eqref{sumxg}. 
Unconditionally, one can use Selberg's zero-density estimate to deduce
$$
\Big| \sum_{0<\g\le T} (x^{i\g} - x^{\rho-1/2}) \Big| \ll
\frac{T\log^2 (2x)}{\log T};
$$
see e.g. (3.8) of \cite{FZ}.  This gives \eqref{sumxg} when $\log x =
o(\sqrt{\log T})$.

We next relate Conjecture \ref{conj3} to the distribution of primes in
short intervals.  

\begin{conjecture}\label{primeshort}
For any $\eps>0$, if $x$ is large and $y\le x^{1-\epsilon}$, then
$$
\psi(x+y) - \psi(x) = y + o(x^{\frac 12}/\log \log x).
$$
\end{conjecture}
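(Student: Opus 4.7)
The plan is to approach this via the explicit formula for $\psi$. Since the requested error bound is comparable to $x^{1/2}$, the argument implicitly presumes the Riemann Hypothesis; under RH one has
\[
\psi(x+y) - \psi(x) - y = -\sum_{\rho}\int_{x}^{x+y} u^{\rho-1}\,du + O(\log^{2}x),
\]
with $\rho=\tfrac12+i\g$ running over non-trivial zeros (interpreted via a smooth truncation at some height $T=x^{A}$, the tail $|\g|>T$ contributing $O\!\(x^{1/2}\log^{2}(x)\,y/T\)$, which is negligible for $A$ large). Extracting $u^{-1/2}$ and swapping sum and integral, the conjecture becomes
\[
\int_{x}^{x+y} \sum_{|\g|\le T} u^{i\g}\,du = o\!\(x/\log\log x\),
\]
uniformly for $y\le x^{1-\eps}$.

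The next step is to split the inner sum by $|\g|$. Zeros with $|\g|\,y/x \gg 1$ oscillate appreciably over $[x,x+y]$, and integrating by parts recovers a $1/|\g|$ saving; the resulting contribution can in principle be handled by Montgomery's pair correlation together with a second-moment estimate of Selberg type. The difficult range is $|\g|\le T_{0}$ with $T_{0}\asymp x/y$: here $u^{i\g}=x^{i\g}(1+O(|\g|y/x))$ is essentially constant across the short interval, the $u$-integration contributes a harmless factor $y$, and one is reduced to the pointwise cancellation statement
\[
\sum_{|\g|\le T_{0}} x^{i\g} = o\!\(\frac{T_{0}}{\log\log x}\).
\]
The trivial bound is $N(T_{0})\asymp T_{0}\log T_{0}$, so one must beat the triangle inequality by a factor of $(\log T_{0})(\log\log x)$.

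This pointwise cancellation is the crux of the matter, and the reason the statement is posed as a conjecture. Pair-correlation input furnishes sharp $L^{2}$ bounds and hence an almost-all version of Conjecture~\ref{primeshort} in the spirit of Goldston--Montgomery, but converting mean-square savings into pointwise control requires quantitative pair-correlation information strong enough to bound $\max_{x\in[X,2X]}|\sum_{|\g|\le T}x^{i\g}|$, and no such result is known. A complete proof would therefore seem to require either a genuinely new input on the vertical distribution of zeros --- ruling out sporadic alignments of the low-lying ordinates --- or an approach to short-interval primes that bypasses zero-by-zero estimation altogether.
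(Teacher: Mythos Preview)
The statement you were asked to prove is Conjecture~\ref{primeshort}, and the paper offers no proof of it; it is stated as a conjecture and used only as a hypothesis in Theorem~\ref{theoremshort}. You correctly recognise this and do not claim a proof, instead giving a heuristic reduction that explains the obstruction. That reduction is accurate and is essentially the content of Theorem~\ref{theoremshort}: under RH, the explicit formula converts the short-interval statement into pointwise control of $\sum_{|\g|\le T_0} x^{i\g}$ with $T_0\asymp x/y$, which is exactly Conjecture~\ref{conj3}. The paper makes this link precise in both directions --- Conjecture~\ref{primeshort} $\Rightarrow$ Conjecture~\ref{conj3} via a Beurling--Selberg majorant construction, and Conjecture~\ref{conj3} $\Rightarrow$ a slightly weaker version of Conjecture~\ref{primeshort} (error $o(x^{1/2}\log x)$) via partial summation on the explicit formula, much as you sketch.

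Your write-up is thus appropriate as a discussion of why the statement is conjectural. One small point: you describe the high-frequency range $|\g|\,y/x\gg 1$ as ``in principle'' handled by pair correlation and Selberg-type second moments, but even there one needs Conjecture~\ref{conj3} (or something of comparable strength) to get the pointwise bound, since $L^2$ input alone does not suffice; the paper's Theorem~\ref{theoremPC} and Lemma~\ref{PCY} show what pair-correlation hypotheses are actually needed. Otherwise your analysis matches the paper's.
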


\begin{theorem}\label{theoremshort}
Assume RH.
Conjecture \ref{primeshort} implies Conjecture \ref{conj3}, 
and hence Conjectures \ref{conj1} and \ref{conj2}.  Conversely,
if RH and Conjecture \ref{conj3} holds, then for all fixed $\eps>0$, large $x$
and $y\le x^{1-\eps}$,
$$
\psi(x+y) - \psi(x) = y + o(x^{\frac12} \log x).
$$
\end{theorem}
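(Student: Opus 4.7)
Both directions rest on the standard truncated explicit formula
\begin{equation*}
\psi(x+y)-\psi(x) = y - \sum_{|\gamma|\le T}\frac{(x+y)^{\rho}-x^{\rho}}{\rho} + O\!\left(\frac{x\log^{2}(xT)}{T}\right),
\end{equation*}
valid for $0<y\le x$ and $T\ge 2$. On RH one has $\rho=\tfrac12+i\gamma$, and after using $((x+y)^{\rho}-x^{\rho})/\rho = \int_x^{x+y} t^{\rho-1}\,dt$ and pairing $\rho$ with $\bar\rho$, the zero sum becomes $2\int_x^{x+y} t^{-1/2}\,\Re S(T,t)\,dt$, where $S(T,t)=\sum_{0<\gamma\le T} t^{i\gamma}$.

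For the implication Conjecture~\ref{conj3} $\Rightarrow$ the short-interval estimate, fix $\varepsilon>0$ and $y\le x^{1-\varepsilon}$, choose $T$ slightly less than $\sqrt x\,(\log x)^{5/2}$ (the maximum permitted by Conjecture~\ref{conj3} at $x$), so that the truncation error is $o(\sqrt x)$, and split the zero sum at $N=\min(x/y,T)$. On $|\gamma|\le N$ the integral form combined with Conjecture~\ref{conj3} applied to $S(N,t)$ (available since $N^{2}/\log^{5}N\le x$ and $N^{A}\ge x$ for $A\ge 1/\varepsilon$) gives a contribution $o(Ny/\sqrt x)=o(\sqrt x)$. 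When $N<T$ (i.e.\ $y$ exceeds $x/T$), one approximates $\frac{(x+y)^{\rho}-x^{\rho}}{\rho}$ by $\frac{(x+y)^{1/2+i\gamma}-x^{1/2+i\gamma}}{i\gamma}$ on $N<|\gamma|\le T$ and uses Abel summation against $S(t,z)$ for $z\in\{x,x+y\}$; Conjecture~\ref{conj3} then shows the tail sums $\sum_{N<\gamma\le T} z^{i\gamma}/\gamma$ are $o(\log(T/N))=o(\log x)$, contributing $o(\sqrt x\log x)$. Summing yields $\psi(x+y)-\psi(x) = y + o(\sqrt x\log x)$.

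For Conjecture~\ref{primeshort} $\Rightarrow$ Conjecture~\ref{conj3}, fix $T$ and $x\in[T^{2}/\log^{5}T,T^{A}]$, and introduce an auxiliary $y\ll x/T$ so that the Taylor expansion $(x+y)^{\rho}-x^{\rho}=\rho y x^{\rho-1} + O(|\rho|^{2}y^{2}x^{\rho-2})$ is valid uniformly for $|\gamma|\le T$. The explicit formula becomes
\begin{equation*}
\psi(x+y)-\psi(x)-y = -\tfrac{2y}{\sqrt x}\Re S(T,x) + O\!\left(\tfrac{y^{2}T^{2}\log T}{x^{3/2}} + \tfrac{x\log^{2}(xT)}{T}\right),
\end{equation*}
and Conjecture~\ref{primeshort} bounds the left side by $o(\sqrt x/\log\log x)$ as soon as $y\le x^{1-\varepsilon}$. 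Solving for $\Re S(T,x)$ and choosing $y$ to balance the two $O$-terms yields $\Re S(T,x)=o(T)$; the imaginary part is extracted by running the same argument at a slightly perturbed value of $x$ and taking a linear combination.

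The principal technical difficulty lies in this converse direction: the Taylor constraint forces $y\ll x/T$, while the extraction of $\Re S(T,x)$ divides by $2y/\sqrt x$, so the conditional error $o(\sqrt x/\log\log x)$ is acceptable only when $y\gg x/(T\log\log x)$; the two bounds coincide up to the factor $\log\log x$, leaving just enough room. For $x$ close to the upper endpoint $T^{A}$, the required $y$ may approach $x^{1-\varepsilon}$, and a dyadic decomposition of $x$ is needed to verify the bounds uniformly. In the other direction, the crucial saving of one logarithm over the classical RH bound $O(\sqrt x\log^{2}x)$ comes from the Abel-summation cancellation in the middle-range zero sum, where Conjecture~\ref{conj3} turns the trivial $O(1/t)$ into $o(1/t)$.
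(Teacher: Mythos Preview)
Your converse direction (Conjecture~\ref{conj3} $\Rightarrow$ the short-interval estimate) is close in spirit to the paper's, though the paper truncates the explicit formula at height $x$ rather than near $\sqrt{x}$, splits the zeros at $x^{\varepsilon/2}$, and handles the large zeros by straightforward partial summation against $S(t,x)$; this avoids the delicate verification of the hypotheses of Conjecture~\ref{conj3} at the auxiliary level $N$ that your sketch requires.

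The forward direction (Conjecture~\ref{primeshort} $\Rightarrow$ Conjecture~\ref{conj3}) has a genuine gap. Your plan is to insert $y\ll x/T$ into the sharply truncated explicit formula, Taylor-expand, and solve for $\Re S(T,x)$. After dividing by $2y/\sqrt{x}$ the three error terms become
\[
o\Bigl(\tfrac{x}{y\log\log x}\Bigr),\qquad \tfrac{yT^{2}\log T}{x},\qquad \tfrac{x^{3/2}\log^{2}(xT)}{yT},
\]
and you need all three to be $o(T)$. The second forces $y=o(x/(T\log T))$, while the first forces $yT\log\log x/x\to\infty$; since $\log T\gg\log\log x$, these two constraints are incompatible already at the lower endpoint $x\approx T^{2}/\log^{5}T$. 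For $x=T^{A}$ with $A>2$ the situation is worse: the third term is $\asymp T^{3A/2-2}\log^{2}T/y$, which cannot be made $o(T)$ for any $y\le x$. The sharp truncation error $O(x\log^{2}(xT)/T)$ in the explicit formula is simply too large in the range $x\ge T^{2}$ covered by Conjecture~\ref{conj3}, and no choice of $y$ rescues it.

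The paper circumvents this by a completely different device: it replaces the indicator of $(0,T]$ by a Beurling--Selberg minorant/majorant $F(\gamma/T)$ whose Fourier transform is supported on $[-K,K]$, writes
\[
\sum_{0<\gamma\le T}x^{i\gamma}=\sum_{|\gamma|\le x}x^{i\gamma}F(\gamma/T)+O\!\left(\tfrac{T\log T}{K}\right),
\]
and then expresses the smoothed sum via Fourier inversion and an integration by parts as $\tfrac{-T}{2\pi\sqrt{x}}\int_{-K}^{K}\hat F'(v)\bigl(G(xe^{2\pi v/T})-G(x)\bigr)\,dv$ plus acceptable errors, where $G(u)=\psi(u)-u$. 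Now the short-interval input $G(xe^{2\pi v/T})-G(x)=o(\sqrt{x}/\log\log x)$ enters directly, with $K=\log^{2}T$. This smoothing is the missing idea: it kills the sharp truncation error and, incidentally, obviates your separate extraction of the imaginary part, since the one-sided cutoff $F(\gamma/T)$ already isolates $\sum_{0<\gamma\le T}x^{i\gamma}$ rather than its real part.
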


{\bf Remarks.}  
Whereas the behavior of the left side of \eqref{Landau} is governed by a single
prime when $x$ is small, for larger $x$ the sum is governed by the
primes in an interval.   It has been conjectured (\cite{MS},
Conjecture 2) that for $x^\eps \le h\le x^{1-\eps}$,
$\psi(x+h)-\psi(x)-h$ is normally distributed with mean 0 and variance
$h\log (x/h)$.  Thus, it is reasonable to conjecture that for every $\eps>0$, 
\be\label{psixy}
\psi(x+y)-\psi(x)-y \ll_\eps y^{1/2} x^{\eps} \qquad (1\le y\le x),
\ee
a far stronger
assertion than Conjecture \ref{primeshort}.  It is known that RH implies
$\psi(x)=x+O(x^{1/2}\log^2 x)$ (von Koch, 1900).

A statement similar to the second part of Theorem \ref{theoremshort} has 
been given by Gonek (\cite{Go}, Theorem 4).   Assuming RH, Gonek showed that if
$$
\sum_{0<\g\le T} x^{i\g} \ll_\eps T x^{-1/2+\eps} + T^{1/2} x^{\eps}
$$
holds uniformly for all $x,T\ge 2$ and for each fixed $\eps>0$, then
\eqref{psixy} follows.

\medskip

We also want to describe how to bound the sum $\sum_{0<\g \le T}
x^{i\g}$ in terms of the pair correlation function
\be\label{PCF}
\FF(x,T) = \sum_{0<\g,\g'\le T}
\frac{4 x^{i(\g-\g')}}{4+(\g-\g')^2}.
\ee
Such bounds have been given by Gallagher
and Mueller \cite{GM}, Mueller \cite{Mue},
Heath-Brown \cite{HB}, and Goldston and
Heath-Brown \cite{GH}. 
First we state a strong version of the Pair
Correlation Conjecture for $\zeta(s)$.

\begin{conjecture}\label{conjPC}
Fix a real number $A>1$.  Uniformly for all 
$\frac{T^2}{(\log T)^{6}} \le x\le T^{A}$ we have
$$
\FF(x,T)=N(T)+o\left(\frac{T}{\log T}\right) \qquad (T\to\infty).
$$
\end{conjecture}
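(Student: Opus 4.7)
The plan is to follow the Montgomery explicit-formula route. Writing $4/(4+u^{2})$ in terms of its Fourier transform $2\pi e^{-2|\xi|}$ yields the identity
$$
\FF(x,T) = \int_{-\infty}^\infty \Bigl|\sum_{0<\g\le T} e^{i\eta\g}\Bigr|^{2} e^{-2|\eta-\log x|}\, d\eta,
$$
and applying the Landau--Gonek formula \eqref{Landau} (on RH) to the inner zero sum converts the right-hand side into a double prime-power sum plus a controlled remainder, in the spirit of Gallagher--Mueller and Goldston--Heath-Brown. After normalization the result has the schematic shape
$$
\FF(x,T) = N(T) + 2\re\!\sum_{n,m\ge 2}\frac{\Lambda(n)\Lambda(m)}{\sqrt{nm}}\,K_{x,T}(n,m) + O(\log^{3}T),
$$
where the $N(T)$ emerges from the diagonal of the formal square and $K_{x,T}$ is a Fej\'er-type kernel that concentrates the pair $(n,m)$ near $\sqrt{x}$ in a window of width $\asymp \sqrt{x}/T$.

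The next step is to separate the double prime sum into its diagonal ($n=m$) and off-diagonal ($n\ne m$) contributions. The diagonal, after integration against the kernel, combines with the already-extracted $N(T)$ to leave an acceptable $o(T/\log T)$ remainder. The off-diagonal piece is controlled by partial summation in terms of
$$
\frac{T}{\sqrt{x}\,\log T}\,\max_{t\asymp\sqrt{x}}\bigl|\psi(t+y)-\psi(t)-y\bigr| \quad\text{with } y\asymp \sqrt{x}/T.
$$
In the range $T^{2}/(\log T)^{6}\le x\le T^{A}$ one has $y\le x^{1-1/(2A)}$ once $T$ is large, so Conjecture \ref{primeshort} applies and delivers an off-diagonal contribution of size $o(T/\log T)$ after averaging against $K_{x,T}$, yielding $\FF(x,T)=N(T)+o(T/\log T)$.

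The main obstacle is precisely the off-diagonal step in the window $x\gtrsim T^{2}$. Under RH alone von Koch gives only $\psi(t+y)-\psi(t)-y\ll t^{1/2}\log^{2}t$, which is trivial for $y\ll\sqrt{t}$, so genuinely new information about primes in very short intervals is unavoidable. A converse argument modelled on the second half of Theorem \ref{theoremshort} should moreover show that Conjecture \ref{conjPC} is essentially equivalent to Conjecture \ref{primeshort} (up to the precise shape of the $o$-terms). Thus the realistic output of this plan is not an unconditional proof of Conjecture \ref{conjPC}, but rather a reduction of it to a strong short-interval prime distribution hypothesis of the type already considered in the excerpt.
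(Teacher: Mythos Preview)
The statement you are addressing is Conjecture~\ref{conjPC}, not a theorem. The paper offers no proof of it and does not attempt one; it is introduced purely as a hypothesis and then fed into Theorem~\ref{theoremPC}, where the authors deduce Conjecture~\ref{conj3} from it. There is thus no ``paper's own proof'' to compare your argument against. You implicitly acknowledge this in your final paragraph, where you downgrade the goal to a reduction of Conjecture~\ref{conjPC} to Conjecture~\ref{primeshort}. Note that this implication is itself not in the paper: the authors prove Conjecture~\ref{primeshort} $\Rightarrow$ Conjecture~\ref{conj3} (Theorem~\ref{theoremshort}) and Conjecture~\ref{conjPC} $\Rightarrow$ Conjecture~\ref{conj3} (Theorem~\ref{theoremPC}) separately, but never Conjecture~\ref{primeshort} $\Rightarrow$ Conjecture~\ref{conjPC}.

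Even as a conditional argument, your sketch has a genuine gap at the very first step. Your identity
\[
\FF(x,T)=\int_{-\infty}^{\infty}\Bigl|\sum_{0<\gamma\le T}e^{i\eta\gamma}\Bigr|^{2}e^{-2|\eta-\log x|}\,d\eta
\]
is correct, but inserting the Landau--Gonek formula \eqref{Landau} into the inner sum is fatal in the range you need. On RH that formula gives, for $e^{\eta}\asymp x$, an error of size $O(x^{1/2}\log^{2}T)$ in $\sum_{0<\gamma\le T}e^{i\eta\gamma}$. When you square and integrate against a weight of total mass $O(1)$, the error-squared contribution alone is of order $x\log^{4}T$, and for $x$ anywhere near $T^{A}$ with $A>1$ this dwarfs the target main term $N(T)\asymp T\log T$; the claimed $O(\log^{3}T)$ remainder is therefore not attainable this way. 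Montgomery's original treatment avoids precisely this difficulty by \emph{not} truncating sharply at $\gamma=T$ inside the explicit formula: he works with smoothly weighted sums such as $\sum_{\gamma}x^{i\gamma}/(1+(t-\gamma)^{2})$, for which the explicit formula has a much smaller error, and only afterwards localises in $t$. Without that device (or an equivalent smoothing), your scheme cannot produce even a conditional proof of Conjecture~\ref{conjPC} in the stated range of $x$.
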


\begin{theorem}\label{theoremPC}
Assume RH.  Then Conjecture \ref{conjPC} implies
Conjecture \ref{conj3}, and 
therefore also Conjectures \ref{conj1} and \ref{conj2}. 
\end{theorem}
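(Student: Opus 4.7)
The plan is to show that Conjecture \ref{conjPC} implies Conjecture \ref{conj3} under RH; Theorem \ref{123} then yields Conjectures \ref{conj1} and \ref{conj2}. Writing $S(y) := \sum_{0 < \g \le T} y^{i\g}$, the starting point is the Fourier identity
\[
\FF(x,T) = \int_{-\infty}^\infty e^{-2|v|}\,|S(xe^v)|^2\, dv,
\]
obtained from $\frac{4}{4+u^2} = \int_{-\infty}^\infty e^{-2|v|} e^{iuv}\, dv$ by exchanging sum and integral in the definition of $\FF(x,T)$. Under Conjecture \ref{conjPC}, this integral equals $N(T) + o(T/\log T)$ uniformly for $T^2/(\log T)^6 \le x \le T^A$, giving strong weighted $L^2$ control on $S$ in the logarithmic direction.

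To extract a pointwise bound on $S(x_0)$ for $x_0 \in [T^2/(\log T)^5, T^A]$, I would run a test-function / Cauchy--Schwarz argument. Choose a function $\phi$ with $\widehat\phi(-\g) \approx 1$ for $0 < \g \le T$, so that
\[
S(x_0) = \int \phi(v)\, S(x_0 e^v)\, dv + (\text{small error from }\widehat\phi(-\g) - 1),
\]
and apply Cauchy--Schwarz against $e^{-2|v|}$ to obtain
\[
\left|\int \phi(v)\, S(x_0 e^v)\, dv\right|^2 \le \FF(x_0, T)\int_{-\infty}^\infty |\phi(v)|^2\, e^{2|v|}\, dv.
\]
The task reduces to designing $\phi$ so that the right-hand side is $o(T^2)$.

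The main obstacle is this extremal problem. Plancherel forces $\|\phi\|_2^2 \gg T$ whenever $\widehat\phi$ is close to $1$ on a set of measure $T$, and the weight $e^{2|v|}$ only worsens matters; naive Cauchy--Schwarz then yields $|S(x_0)|^2 \ll \FF(x_0, T)\cdot T \ll T^2 \log T$, short of the $o(T^2)$ target. Overcoming this will require exploiting the \emph{precise} error rate $o(T/\log T)$ in Conjecture \ref{conjPC}, together with the fact that $S(x_0 e^v)$ is bandlimited with bandwidth $T$ in $v$, probably through a Beurling--Selberg--style extremal choice of $\phi$, or alternatively through an averaging argument in $x_0$ that suppresses the off-diagonal pairs with $|\g - \g'|$ large (which are nearly invisible to $\FF(x_0,T)$ but contribute with full weight to $|S(x_0)|^2$). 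An indirect route through Conjecture \ref{primeshort} and Theorem \ref{theoremshort}, using Goldston--Montgomery--type equivalences between pair correlation and the variance of $\psi(x+h)-\psi(x)-h$, may furnish another path, but lifting a second-moment estimate to the pointwise statement of Conjecture \ref{primeshort} is itself a substantial step.
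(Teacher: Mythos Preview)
Your proposal correctly locates the starting point (the Fourier identity for $\FF$) and correctly diagnoses the obstacle: a direct Cauchy--Schwarz against $e^{-2|v|}$ loses a factor of $\log T$. But you do not overcome that obstacle; the three ``possible routes'' you list are left as speculation, and none of them is the one that actually works here. So as written the argument has a genuine gap at the decisive step.

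The idea you are missing is the Goldston--Heath-Brown $\beta$-scaling. Instead of working with $\FF(x,T)$ itself, introduce
\[
G_\beta(x,T)=\sum_{0<\g,\g'\le T}\frac{4\beta^2\,x^{i(\g-\g')}}{4\beta^2+(\g-\g')^2}
=\beta\int_{-\infty}^\infty e^{-2\beta|v|}\,|S(xe^v)|^2\,dv,
\]
with a parameter $\beta\ge 1$. Two facts from \cite{GH} then do all the work. First, one has the pointwise bound
\[
|S(x)|^2 \ll T\beta^{-1}\max_{t\le T} G_\beta(x,t),
\]
which is exactly the ``$S$ is bandlimited of bandwidth $T$'' principle you alluded to, made quantitative. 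Second, the scaled kernel can be unwound back to $\FF$:
\[
G_\beta(x,t)=\FF(x,t)+\beta(1-\beta^2)\int_{-\infty}^\infty\bigl(\FF(xe^u,t)-\FF(x,t)\bigr)e^{-2\beta|u|}\,du.
\]
This is the crucial point you did not find: the correction is an integral of \emph{differences} of $\FF$ at nearby arguments, not of $\FF$ itself. Under Conjecture \ref{conjPC} one has $\DD(y,t)=1+o((\log T)^{-2})$ uniformly over the relevant range of $y$ and $t$, so each such difference is $o((\log T)^{-2})$, and hence $G_\beta(x,t)=N(t)\bigl(1+o(\beta^2(\log T)^{-2})\bigr)$. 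Plugging into the pointwise bound gives
\[
|S(x)|^2 \ll \frac{T^2\log T}{\beta}+o\!\left(\frac{\beta\,T^2}{\log T}\right),
\]
and choosing $\beta$ just a shade larger than $\log T$ makes both terms $o(T^2)$.

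Your alternative suggestions do not lead there. An extremal choice of a single $\phi$ cannot beat the Plancherel barrier you identified; what the $\beta$-scaling really does is exploit cancellation in $\FF(xe^u,t)-\FF(x,t)$ rather than smallness of $\FF$ itself. Averaging in $x_0$ conflicts with the pointwise nature of Conjecture \ref{conj3}. And the detour through Conjecture \ref{primeshort} would, as you note, require upgrading a variance bound to a pointwise one, which is not available.
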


{\bf Remarks.}  
The original pair correlation conjecture of Montgomery \cite{M1} states that
$$
\FF(x,T) \sim N(T) \qquad (T\to\infty)
$$
uniformly for $T\le x \le T^A$, where $A$ is any fixed real number.
Tsz Ho Chan \cite{Ch} has made an even stronger conjecture than
Conjecture \ref{conjPC}, namely he conjectured that for any
$\epsilon>0$ and any large $A>1$,
$$
\FF(x,T) = N(T) + O\left(T^{1-\epsilon_1}\right)
$$
if $T^{1+\epsilon}\le x\le T^A$, where $\epsilon_1>0$ may depend on $\epsilon$,
and the implicit constant may depend on $\epsilon$ and $A$.  


In the next section, we prove Theorems \ref{theorem1}--\ref{theoremPC}.  In
section \ref{sec:genF} we discuss analogous results for general
$L$-functions.

%
\section{Proof of Theorems \ref{theorem1}--\ref{theoremPC}}
%

\emph{Proof of Theorem \ref{theorem1} (i)}.
Let $\II$ denote an interval of $\T$ for which 
$|\int_\II  g_{\a}(x) dx|$ attains its maximum.  Let $\epsilon$ be a small
positive number, and let
$h_{\epsilon}:\T \to \R$ be a smooth function satisfying 
$h_{\epsilon}(x) \ge 0$ for all 
$x$, $h_{\epsilon}(x)=0$ for $\epsilon <x \le 1$, and 
$\int_\T h_{\epsilon}(x)dx=1$. 
Set $f(x) =\int_{\T} h_{\epsilon}(y)\chi_{\II}(x-y) dy$, where 
${\chi}_\II$ 
denotes the characteristic function of the interval $\II$.  
Then $f$ is smooth, and so 
\eqref{FZ} holds for $f$.  
Therefore 
\be\label{heps}
\int_\T h_{\epsilon}(y) \Big( \sum_{\substack{ 0<\g \le T \\ \{\a \g\} 
\in \II+y}} 1 - N(T) |\II| \Big)\, dy = 
T \int_0^\eps h_{\epsilon}(y) \int_{\II+y} g_{\a}(x) dx\, dy + o(T).
\ee
By \eqref{galpha}, $g_{\a}$ is bounded and it follows that 
$$
\Big| \int_{\II+y}g_{\a}(x) dx -\int_{\II} g_{\a}(x)dx \Big|  
\ll \epsilon
$$
for $0\le y\le \eps$.
Therefore the right side of \eqref{heps} equals 
$$
T \int_\II g_{\a}(x) dx + o(T) +O(\epsilon T).
$$
It follows that for some choice of $y\in (0,\epsilon)$ one must have 
$$
\Big| \sum_{\substack{ 0< \g \le T\\ \{\a \g \} \in \II+y}} 1 -N(T) |\II| 
\Big| \ge T\Big|\int_{\II} g_{\a}(x) dx \Big| + o(T) + O(\epsilon T).
$$
Letting $\epsilon \to 0$, we obtain our lower bound for the discrepancy.
\bigskip

\emph{Proof of Theorem \ref{theorem1} (ii) and Theorem \ref{123}}.
Let 
$$
h(u) = \begin{cases} 1 &  \{ u\} \in \II \\ 0 & \text{else} \end{cases}
$$
and let $J$ be a positive integer.  There are trigonometric
polynomials $h^+$ and $h^-$, depending on $J$ and $\II$, satisfying
\benn
\begin{split}
h^-(u) &\le h(u) \le h^+(u) \qquad (u\in \R), \\
h^{\pm}(u) &= \sum_{|j| \le J} c_j^{\pm} e^{2\pi i j u}, \\
c_0^\pm &= |\II| \pm \frac{1}{J+1}, \qquad |c_j^{\pm}| \le
\frac{1}{|j|} \quad (j\ge 1).
\end{split}
\eenn
For proofs, see Chapter 1 of \cite{M2}, for example.  These
trigonometric polynomials are optimal in the sense that with $J$
fixed, $|c_0^\pm-|\II||$ cannot be made smaller.  We have
$$
\sum_{0<\g\le T} h^-(\a\g) \le 
\sum_{\substack{ 0< \g \le T \\ \{\a \g\} \in \II}} 
1 \le \sum_{0<\g\le T} h^+(\a\g).
$$
For integers $j$, let $x_j=e^{2\pi j \a}$ and for positive $j$ put
$$
V_j = \frac{-\Lambda (n_{x_j})}{2\pi x_j^{1/2}}  
\frac{e^{iT\log(x_j/n_{x_j})}-1}{i\log(x_j/n_{x_j})}.
$$
Also define $V_{-j} = \overline{V_j}$.
By \eqref{Landau}, for nonzero $j$ we have
$$
 \sum_{0<\g\le T} x_j^{i \g} = V_j + O\( x_{|j|}^{1/2} \log^2 (x_{|j|} T) \).
$$
This will be used for 
$$
1 \le |j| \le J_0 := \left\lfloor \frac{2\log T - 5\log\log T}{2\pi \a}
\right\rfloor. 
$$
Suppose that $J\ge J_0$.
We obtain (implied constants depend on $\a$)
\begin{align*}
\sum_{0<\g\le T} & h^{\pm}(\a\g) = c_0^{\pm} N(T) + \sum_{1\le |j| \le
  J} c_j^{\pm} \sum_{0<\g\le T} x_j^{i \g} \\
&=  c_0^{\pm} N(T) + 2 \Re \sum_{1\le j \le J_0}
  c_j^{\pm} \biggl[V_j + O(x_j^{1/2} \log^2 T) \biggr] \\
&\qquad + 
  \sum_{J_0 < |j| \le J} O\pfrac{1}{|j|} \Big| \sum_{0<\g\le T}
  x_j^{i\g} \Big| \\
&\!\!\!\!\!= |\II| N(T)+ \sum_{j\ne 0} c_j^{\pm} V_j \pm \frac{N(T)}{J+1} + o(T) + \!\!
\sum_{J_0<|j| \le J}  O(|j|^{-1}) \Big| \sum_{0<\g\le T}
  x_j^{i\g} \Big|,
\end{align*}
where the term $o(T)$ is uniform in $\II$.
If $\a = \frac{a}{q} \frac{\log p}{2\pi}$ for a prime $p$ and coprime
positive $a,q$, then $x_j = p^{aj/q}$ and consequently 
$$
V_{kq}=-\frac{T\log p}{2\pi p^{ak/2}}
$$
for nonzero integers $k$.
Thus,
$$
\sum_{\substack{j\ne 0 \\ q|j }} c_j^{\pm} V_j = T \int_\T h^{\pm} g_\a.
$$
If $q\nmid j$, then $x_j$ is not an integer.
Hence
$$
\sum_{\substack{j\ne 0 \\ q\nmid j}} c_j^{\pm} V_j \ll T 
\sum_{\substack{1\le |j| \le J \\ q \nmid j}}
\frac{1}{e^{\pi j \a}} \, \left| \frac{e^{iT\log(x_j/n_{x_j})}-1}
{iT\log(x_j/n_{x_j})} \right|.
$$
The sum on the right converges uniformly in $T$, and each summand is
$o(1)$ as $T\to\infty$, hence the left side is $o(T)$.
We conclude
\be\label{sumcjVj}
\sum_{j\ne 0} c_j^{\pm} V_j = T \int_\T h^{\pm} g_\a + o(T).
\ee
When $\a$ is not of the form $\frac{a}{q} \frac{\log p}{2\pi}$, 
$x_j$ is never an integer (for nonzero $j$), and a similar argument 
yields \eqref{sumcjVj}.
Since $h-h^\pm$ has constant sign,
$$
\Big| \int_\T (h-h^{\pm})g_\a \Big| \le \max_{x\in \T} |g_\a(x)| 
\int_\T |h-h^\pm| = \frac{\max_{x\in\T} |g_\a(x)|}{J+1} \ll
\frac{1}{\log T}.
$$
Therefore,
\begin{align*}
\sum_{0<\g\le T} h^{\pm}(\a\g) &= |\II| N(T) + T \int_{\T} h g_\a 
+ o(T) \pm \frac{N(T)}{J+1} \\
&\qquad + \sum_{J_0<|j| \le J}  O\pfrac{1}{|j|} 
\Big| \sum_{0<\g\le T} x_j^{i\g} \Big|.
\end{align*}

For Theorem \ref{theorem1} (ii), we take $J=J_0$.  
For Theorem \ref{123}, take $J=\lfloor \lam \log T
\rfloor$ with $\lam$ fixed, and then let $\lam\to\infty$.

\bigskip

\emph{Proof of Theorem \ref{theoremshort}}.
We first construct a function $F$ which is a good approximation of the
characteristic function of the interval $[0,1]$ and whose Fourier
transform is supported on $[-K,K]$, where $K$ is a parameter to be
specified later.  Consider the entire function
$$
H(z)=\pfrac{\sin \pi z}{\pi}^2 \biggl( \sum_{n=1}^\infty \frac{1}{(z-n)^2} - 
\sum_{n=1}^\infty \frac{1}{(z+n)^2} + \frac{2}{z} \biggr)
$$
for complex $z$, and set
$$
F(z) = \frac{H(Kz) + H(K-Kz)}{2}.
$$
The function $H(z)$ is related to the so-called Beurling-Selberg
functions, and basic facts about $H$ can be found in \cite{V}.
In particular, for real $x$, (i) $H(x)$ is an odd function; (ii) the
Fourier transform $\hatH$ is supported on $[-1,1]$; (iii) $H(x) =
\sgn(x) + O(\frac{1}{1+|x|^3})$, where $\sgn(x)=1$ if $x>0$,
$\sgn(x)=-1$ if $x<0$ and $\sgn(0)=0$; (iv) $H'(x)
=O(\frac{1}{1+|x|^3})$.  Item (iii) follows from (2.26) of \cite{V}
and the Euler-Maclaurin summation formula, and (iv) follows from
Theorem 6 of \cite{V}.  Let $I$ be the indicator function 
of the interval $[0,1]$.  It follows that the Fourier transform
$\hatF$ of $F$ is supported on $[-K,K]$ and
\be\label{Fest}
| F(x) - I(x) | \ll \frac{1}{1+K^3 |x|^3} + \frac{1}{1+K^3|1-x|^3}.
\ee
Since 
$$
\hatI(t) = \frac{1-e^{-2\pi i t}}{2\pi i t},
$$
it follows readily that $\hatF(t) \ll 1$, uniformly in $K$, and
\begin{align*}\label{Fhatprime}
\hatF'(t) &= \frac{1-(1+2\pi i t) e^{-2\pi i t}}{-2\pi i t^2} + 
O\biggl( \int_{-\infty}^{\infty} \frac{|x|}{1+K^3 |x|^3} +
\frac{|x|}{1+K^3 |1-x|^3}\, dx \!\biggr) \\
&= O\( \frac{1}{1+|t|} + \frac{1}{K} \).
\end{align*}

Next, let $T\ge 2$ and $T \le x \le T^A$.   Write
$$
\sum_{0<\g\le T} x^{i\g} = \sum_{|\g| \le x} x^{i\g} F(\g/T) +
\sum_{|\g| \le x} x^{i\g} \bigl[ I(\g/T)-F(\g/T) \bigr].
$$
By \eqref{NT} and \eqref{Fest}, the second sum on the right is
\begin{align*}
&\ll N\pfrac{T}{K} + \( N\(T + \frac{T}{K}\) -
N\(T - \frac{T}{K}\)\) \\
&\qquad + \frac{T^3}{K^3} \biggl( \; \sum_{|\g| > T/K}
\frac{1}{|\g|^3} + \sum_{|\g-T| \ge T/K}
\frac{1}{|\g-T|^3} \; \biggr) \\
&\ll \frac{T\log T}{K}.
\end{align*}
Also,
\begin{align*}
\sum_{|\g| \le x} x^{i\g} &F(\g/T) \sum_{|\g| \le x} x^{i\g}
  \int_{-K}^K e^{2\pi i v \g/T} \hatF(v) \, dv \\
&= x^{-1/2} \int_{-K}^K e^{-\pi v/T} \hatF(v) \sum_{|\g|\le x} \( x
  e^{2\pi v/T} \)^\rho\, dv \\
&= -\frac{T}{2\pi x^{1/2}} \int_{-K}^K e^{-\pi v/T} \( \hatF'(v) - \frac{\pi}{T}
  \hatF(v) \) \sum_{|\g| \le x} \frac{\( x e^{2\pi v/T}
  \)^\rho}{\rho}\, dv,
\end{align*}
where the last line follows from the previous line using integration
by parts.  The final sum on $\g$ is evaluated using the explicit
formula (see e.g. \cite{Da}, \S 17)
\be\label{explicit}
G(x) := \psi(x) - x = -\sum_{|\g| \le M} \frac{x^\rho}{\rho}
+ O\( \log x + \frac{x\log^2 (Mx)}{M} \),
\ee
valid for $x\ge 2$, $M\ge 2$.  
Since 
$$
\int_{-K}^K e^{-\pi v/T} \(\hatF'(v)-\frac{\pi}{T} \hatF(v) \)\, dv =0,
$$
we obtain
\begin{align*}
\sum_{|\g| \le x} x^{i\g} F(\g/T) &= \frac{-T}{2\pi \sqrt{x}}
\int_{-K}^K \hatF'(v) \( G(xe^{2\pi v/T})-G(x) \)\, dv \\
&\qquad + O\( K \( 1 + T x^{-1/2} \) \log^2 x\).
\end{align*}
Altogether, this gives
\begin{align*}
\sum_{|\g| \le T} x^{i\g} &\ll \frac{T\log K}{\sqrt{x}} \max_{xe^{-2\pi
    K/T} \le y \le x e^{2\pi K/T}} | G(y)-G(x) | \\
&\qquad + \frac{T\log T}{K} +
K \( 1 + T x^{-1/2} \) \log^2 x.
\end{align*}
Take $K=\log^2 T$ and assume Conjecture \ref{primeshort}.
The first part of Theorem \ref{theoremshort} follows.

The second part is straightforward, starting with the explicit formula
\eqref{explicit} in the form
$$
\psi(x+y)-\psi(x)-y = - \sum_{|\g| \le x} \frac{(x+y)^\rho -
  x^\rho}{\rho} +O(\log^2 x).
$$
Fix $\eps>0$ and apply Conjecture \ref{conj3} with $A=2/\eps$.  By
partial summation,
\begin{align*}
\Big|\sum_{x^{\eps/2} < |\g| \le x} \frac{x^{\rho}}{\rho} \Big| 
&=2 \Big| \Re \sum_{x^{\eps/2} < \g \le x} \frac{x^{\rho}}{\rho} \Big| \\
&\le 2 x^{1/2}
  \biggl| \frac{1}{\frac12 + ix} \sum_{0<\g\le x} x^{i\g} +
   i \int_{x^{\eps/2}}^x
  \frac{1}{(\frac12+it)^2} \sum_{0<\g\le t} x^{i\g} \, dt \biggr|\\
&= o(x^{1/2} \log x).
\end{align*}
The smaller zeros are handled in a trivial way.  We have, for $y\le
x$, 
$$
(x+y)^\rho - x^\rho = x^\rho \( \rho \frac{y}{x} + O\pfrac{|\rho|^2
  y^2}{x^2} \),
$$
whence
$$
\sum_{|\g| \le x^{\eps/2}} \frac{(x+y)^\rho - x^\rho}{\rho} \ll
N(x^{\eps/2}) x^{1/2} \( \frac{y}{x} + x^{\eps/2} \frac{y^2}{x^2} \)
\ll x^{\frac{1}{2} - \frac{\eps}{2}} \log x.
$$
Therefore, $\psi(x+y)-\psi(x) - y = o(x^{1/2}\log x)$, as claimed.

\bigskip

\emph{Proof of Theorem \ref{theoremPC}}.
It will be convenient to work with the normalized sum
$$
\DD(x,T) = \frac{\FF(x,T)}{N(T)}.
$$

\begin{lemma}\label{PCY}
Suppose $T\ge 10$ and $1\le\b\le \frac{T}{2\log T}$.  Then
\begin{align*}
\sum_{0<\g \le T} &x^{i\g} \ll T \pfrac{\log T}{\b}^{\frac12} \! \biggl( 1 + 
\max_{\frac{T}{\b\log T}\le t\le T}
|\DD(x,t)| \\
&\qquad + \b^3 \biggl|\int_{-\infty}^\infty (\DD(xe^{u},t)-\DD(x,t))
e^{-2\b|u|} \, du \biggr| \biggr)^{\frac12} \\
&\ll \frac{T(\log T)^{\frac12}}{\b^{1/2}} \( 1 +
\max_{\frac{T}{\b\log T}\le t\le T}
|\DD(x,t)| \)^{1/2} 
 +T (\b \log T)^{1/2} \\
&\quad \times \biggl( 
\max_{\frac{T}{\b\log T}\le t\le T} \; \max_{0\le u \le \frac{1}{\b}\log
  (\b\log T)} |\DD(xe^u,t)+\DD(xe^{-u},t)-2\DD(x,t)| \biggr)^{\frac12}.
\end{align*}
\end{lemma}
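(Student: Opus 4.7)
The plan is to bound $|S(x,T)|^2$, where $S(x,T)=\sum_{0<\g\le T} x^{i\g}$, by relating it to $\FF(x,t)=N(t)\DD(x,t)$ through a mollification argument. The pivotal tool is the Fourier transform pair
\[
\frac{4\b^2}{4\b^2 + v^2} \;=\; \b \int_{-\infty}^{\infty} e^{-2\b|u|} e^{ivu}\, du,
\]
which, summed over pairs $(\g,\g')$ with $0<\g,\g'\le T$, produces the bridge identity
\[
\sum_{0<\g,\g'\le T} \frac{4\b^2 x^{i(\g-\g')}}{4\b^2+(\g-\g')^2} \;=\; \b \int_{-\infty}^\infty e^{-2\b|u|} |S(xe^u,T)|^2\, du.
\]
This ties the pointwise value of $|S(x,T)|^2$ to a $\b$-scale pair correlation quantity.

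First, using $\b \int_{-\infty}^{\infty} e^{-2\b|u|}\,du = 1$, I would write the trivial identity
\[
|S(x,T)|^2 = \b\int e^{-2\b|u|} |S(xe^u,T)|^2\, du + \b\int e^{-2\b|u|}\bigl[|S(x,T)|^2 - |S(xe^u,T)|^2\bigr]\, du.
\]
The first integral equals the $\b$-scale pair correlation above, and is the main term. To convert the scaled weight $\frac{4\b^2}{4\b^2+v^2}$ back to the unit weight $\frac{4}{4+(\g-\g')^2}$ defining $\FF(x,t)$, I would perform a partial summation in $t$, expressing the scaled Cauchy kernel as a superposition of unit-scale Cauchy kernels. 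This produces $\int_0^T \FF(x,t)\, dm_\b(t)$ for a signed measure $m_\b$ of total variation $O((\log T)/\b)$, essentially supported on $[T/(\b\log T), T]$. The trivial estimate $|\FF(x,t)|\ll t\log t$ controls the missing tail on $[0, T/(\b\log T)]$, and substituting $\FF(x,t)=N(t)\DD(x,t)$ with $N(t)\ll t\log t$ yields the main contribution $\frac{T^2\log T}{\b}(1+\max_t|\DD(x,t)|)$.

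For the variation integral, I would apply the same Fourier pair identity at the shifted point $xe^u$ so that $|S(xe^u,T)|^2 - |S(x,T)|^2$ reduces, modulo a second-level mollifier correction, to $\FF(xe^u, T) - \FF(x, T) = N(T)[\DD(xe^u, T) - \DD(x, T)]$. The extra factor $\b^3$ in the bound arises naturally: expanding $\DD(xe^u,t)-\DD(x,t)\approx u\,\DD_1+\frac{u^2}{2}\DD_2+\cdots$ and integrating, odd powers vanish by symmetry while $\int u^2 e^{-2\b|u|}\,du = 1/(2\b^3)$, so multiplying by $\b^3$ isolates the essential second-derivative content of $\DD$. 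For the second, sharper form of the lemma, I would exploit the evenness of $e^{-2\b|u|}$ to rewrite the first-difference integrand as the symmetric second difference $\DD(xe^u,t)+\DD(xe^{-u},t)-2\DD(x,t)$, then truncate at $|u|\le\frac{1}{\b}\log(\b\log T)$; beyond this cutoff $\b e^{-2\b|u|}\le 1/(\b\log T)$ and the trivial bound $|S|^2\le N(T)^2\sim(T\log T)^2$ absorbs the tail.

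The main obstacle will be the partial-summation step: converting the $\b$-scale pair correlation sum cleanly back to a weighted integral of $\FF(x,t)$ over $t\in[T/(\b\log T),T]$ with precisely the correct total mass $O((\log T)/\b)$. The lower cutoff emerges from the balance between the kernel's scale and the density $N(t)\sim t\log t/2\pi$, and pinpointing the endpoint requires careful tracking of error terms. A secondary bookkeeping challenge is confirming that the mollifier corrections in the variation step combine to produce the exact $\b^3$ coefficient on the first-difference integral, and that the symmetrization in the refined form does not lose the savings from higher-order cancellation.
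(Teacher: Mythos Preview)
Your decomposition
\[
|S(x,T)|^2 \;=\; G_\b(x,T) \;+\; \b\int_{-\infty}^{\infty} e^{-2\b|u|}\bigl(|S(x,T)|^2-|S(xe^u,T)|^2\bigr)\,du
\]
is a tautology, and the plan for the second term is circular.  You propose to reduce $|S(xe^u,T)|^2-|S(x,T)|^2$ to $\FF(xe^u,T)-\FF(x,T)$ ``modulo a second-level mollifier correction'', but that reduction is exactly a statement of the form $|S|^2\approx\FF$, which is stronger than what you are trying to prove.  Iterating the mollifier only pushes the same uncontrolled remainder to the next level.  There is no mechanism here that actually bounds $|S(x,T)|^2$.

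You have also conflated two separate steps.  Converting the $\b$-scale kernel $\tfrac{4\b^2}{4\b^2+v^2}$ to the unit-scale kernel $\tfrac{4}{4+v^2}$ is a Fourier identity in the variable $v=\g-\g'$ (this is Lemma~2 of \cite{GH}), not a partial summation in $t$; it yields
\[
G_\b(x,t)=\FF(x,t)+\b(1-\b^2)\int_{-\infty}^{\infty}\bigl(\FF(xe^u,t)-\FF(x,t)\bigr)e^{-2\b|u|}\,du,
\]
which is where both the $\b^3$ factor and the first-difference integrand in the lemma actually originate.  The passage from $|S(x,T)|$ to $G_\b$ is the genuinely nontrivial step and is \emph{not} your Fourier identity: it is Lemma~1 of \cite{GH}, which gives
\[
\Big|\sum_{0<\g\le T}x^{i\g}\Big|^2 \;\ll\; T\b^{-1}\max_{t\le T}G_\b(x,t),
\]
proved by partitioning $(0,T]$ into $\asymp T/\b$ blocks and applying Cauchy--Schwarz.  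It is this inequality, followed by the trivial bound $G_\b(x,t)\ll(1+\b)t\log^2 t$ for $t\le T/(\b\log T)$, that produces the range $\tfrac{T}{\b\log T}\le t\le T$ and the prefactor $T(\log T/\b)^{1/2}$.  Your proposal never invokes this step, and without it the argument does not close.
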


\begin{proof}  
We follow \cite{GH} by estimating $\sum_{0<\g \le T} x^{i\g}$ in terms
of
$$
G_\beta(x,T) =  \sum_{0<\g,\g'\le T}
\frac{4\beta^2 x^{i(\g-\g')}}{4\beta^2+(\g-\g')^2}.
$$
In particular, $G_1(x,T) = \FF(x,T)$, and
by \eqref{NT}, we have $G_\b(x,T) \ll (1+\b) T \log^2 T$.
By Lemma 1 of \cite{GH}, uniformly for $1\le \b \le T$ and $1\le V\le T$, 
we have
\be\label{YG}
\begin{split}
\sum_{0<\g \le T} x^{i\g} &\ll 
  \Big(T\b^{-1} \max_{t\le T}G_\b(x,t)\Big)^{1/2} \\ &\ll
  \frac{T\log T}{V^{1/2}} + \Big(T\b^{-1} \max_{T/V \le t
  \le T} G_\b(x,t) \Big)^{1/2}.
\end{split}
\ee
 Using Lemma 2 of \cite{GH}, we have
\begin{align*}
G_\b(x,t) &= \b^2 \FF(x,t) +  \b(1-\b^2) \int_{-\infty}^\infty \FF(xe^{u},t)
e^{-2\b|u|}\, du \\
&= \FF(x,t) +  \b(1-\b^2) \int_{-\infty}^\infty (\FF(xe^u,t)-\FF(x,t))
e^{-2\b|u|}\, du,
\end{align*}
from which the first inequality in the lemma follows upon taking 
$V=\b \log T$.  For the second
inequality, combine the terms in the integral with $u=v$ and $u=-v$ for
$0\le v \le \frac{\log (\b \log T)}{\b}$,
and use the trivial bound $\DD(z,t) \ll \log t$ when $|u| \ge
\frac{\log (\beta \log T)}{\beta}$ ($z=x$ and $z=xe^u$).
\end{proof}

In order to finish the proof of Theorem \ref{theoremPC}, 
suppose that $\log T \le \b \le \log^2 T$.  From
Conjecture \ref{conjPC} it follows that the terms
$\DD(xe^u,t)$, $\DD(xe^{-u},t)$, and $\DD(x,t)$, in the
ranges from the statement of the above lemma, are all
of the form $1+o\left( (\log T)^{-2}\right)$. Therefore,
$$
\sum_{0<\g \le T} x^{i\g}=
O\left(T\frac{(\log T)^{1/2}}{\beta^{1/2}}\right)
+o\left(T\frac{\beta^{1/2}}{(\log T)^{1/2}}\right).
$$
Thus, taking $\beta$ slightly larger than $\log T$
produces the desired result.

%
%
\section{General $L$-functions}\label{sec:genF}
%

Consider a Dirichlet series $F(s)=
\sum_{n=1}^\infty a_F(n) n^{-s} $ satisfying the following axioms:
\par
\noindent (i) there exists an integer $m \geq 0$ such that $(s-1)^mF(s)$
is an entire function of finite order; 
\par \noindent (ii)  $F$ satisfies
a functional equation of the type: 
$$
\Phi(s) = w\overline{\Phi}(1-s), 
$$
where 
$$
\Phi(s) = Q^s \prod_{j=1}^r \Gamma(\lambda_j s + \mu_j) F(s) 
$$
with $Q>0$, $\lambda_j > 0$, $\Re(\mu_j) \geq 0$ and $|w|=1$.  (Here,
$\overline{f}(s) = \overline{f(\overline{s})}$); 
\par \noindent (iii) $F(s)$ has an Euler product, which we write as
$$
-\frac{F'}{F}(s)
=\sum_{n=1}^\infty \Lambda_F(n) n^{-s},
$$
where $\Lambda_F(n)$ is supported on powers of primes.

We also need some growth conditions on the coefficients $a_F(n)$ and
$\Lambda_F(n)$.  Although stronger than we require, for convenience 
we impose the
conditions (iv) $\Lambda_F(n) \ll n^{\theta_F}$ for some $\theta_F
< \frac12$ and (v) for every $\eps>0$, $a_F(n)\ll_\eps n^{\eps}$.
Together, conditions (i)--(v) define the \emph{Selberg class} $\SS$ of
Dirichlet series.
For a survey of results and conjectures concerning the
Selberg class, the reader may consult Kaczorowski and Perelli's paper
\cite{KP}.  In particular, $\SS$ includes the Riemann zeta
function, Dirichlet $L$-functions, and $L$-functions attached to
number fields and elliptic curves.  The Selberg class
is conjectured to equal the class of all automorphic $L$-functions,
suitably normalized so that their nontrivial zeros have real parts
between 0 and 1.

The functional equation is not uniquely determined in light of the
duplication formula for $\Gamma$-function, however the real sum
$$
d_F = 2 \sum_{j=1}^r \lam_j
$$
is well-defined and is known as the degree of $F$.  Analogous to
\eqref{NT}, we have (cf. \cite{S3}, (1.6))
\be\label{NFT}
\begin{split}
N_F(T) &= \left| \{ \rho=\beta+i\gamma : F(\rho)=0, 0<\beta<1,
0<\gamma\le T\} \right| \\
&= \frac{d_F}{2\pi} T \log T + c_1 T + O(\log T)
\end{split}
\ee
for some constant $c_1=c_1(F)$.
A function $F\in \SS$ is said to be \emph{primitive} if it cannot be
written as a product of two or more elements of $\SS$. 
We henceforth assume that $F$ is primitive.  The extension of our
results to non-primitive $F$ is straightforward.
It is expected that all zeros of $F$ with
real part between 0 and 1 have real part $\frac12$, a hypothesis we
abbreviate as \RHF.  Although we shall assume \RHF\,  for many of the
results in this section, sometimes a weaker hypothesis suffices, that
most zeros of $F$ are close to the critical line.

\ms
\noindent {\bf Hypothesis} $Z_F$. There exist constants $A>0, B>0$
 (depending on $F$) such that 
\begin{align*}
N_F(\sigma,T) &= \left| \left\{ \beta+i\gamma: \frac12 \le \beta \le \sigma,
 0<\gamma \le T \right\} \right| \\ &\ll T^{1-A(\sigma-1/2)}\log^B T,
\end{align*}
uniformly for $\sigma\ge 1/2$ and $T\ge 2$.
\ms

Hypothesis $Z_F$ is known, with $B=1$, for the Riemann zeta
function and Dirichlet $L$-functions (Selberg \cite{S1}, \cite{S2}),
and certain degree 2 $L$-functions attached to cusp forms (Luo \cite{Luo}).

The next tool we require is an analog of \eqref{Landau}.
It is very similar to Proposition 1 of \cite{MZ}, and with small
modifications to that proof we obtain the following result,
which is nontrivial provided $x^{1/2+\th_F}+x^{1/2+\eps} \ll T$.

\begin{lemma}\label{lem1} Let $F\in\mathcal S$, $x>1$, $T\geq2,$ and 
let $n_x$ be a nearest integer to $x$.
Then, for any $\e > 0$,
\begin{align*}
\sum_{0<\gamma\leq T}x^{\rho} &=
-\frac{\Lambda_F(n_x)}{2\pi} \frac{e^{iT\log(x/n_x)}-1}
{i\log(x/n_x)} \\
&\qquad +O_{\e}\(x^{1+\theta_F}\log (2x) + x^{1+\e} \log T + \frac{\log T}{\log
  x} \).
\end{align*}
\end{lemma}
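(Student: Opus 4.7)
My plan is to follow the template of Proposition 1 of \cite{MZ}, which in turn extends the Riemann zeta case recalled as Lemma 1 of \cite{FZ}. The starting point is a standard representation of the sum over zeros as a contour integral: apply Cauchy's residue theorem to $-\frac{F'}{F}(s)\,x^s$ on a rectangular contour $\mathcal R$ with vertical sides at $\Re s=c:=1+1/\log(2x)$ and $\Re s=-U$ (for large $U$), and horizontal sides at $\Im s=T_1$ (just above $0$) and $\Im s=T_2=T+O(1/\log T)$. The heights $T_1,T_2$ are chosen so that both horizontal edges stay at distance $\gg 1/\log T$ from every zero of $F$, which is possible by the spacing bound $N_F(T+1)-N_F(T)\ll\log T$ coming from \eqref{NFT}. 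Residue bookkeeping then identifies $\sum_{0<\g\le T}x^\rho$ with the contour integral, up to the residue $mx$ at $s=1$ and the trivial-zero residues outside the strip (which are $O(1)$ as $U\to\infty$).

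Next I would evaluate the right vertical side by expanding $-F'/F(s)=\sum_n\Lambda_F(n)n^{-s}$ via axiom (iii) and integrating termwise:
$$
\frac{1}{2\pi i}\int_{c-iT_1}^{c+iT_2} \left(-\frac{F'}{F}(s)\right) x^s\,ds \;=\; \sum_n\Lambda_F(n)\cdot\frac{(x/n)^{c+iT_2}-(x/n)^{c-iT_1}}{2\pi i\log(x/n)}.
$$
Isolating $n=n_x$ and using $(x/n_x)^c=1+O(1/x)$ together with $T_2=T+O(1/\log T)$ and $T_1=o(1)$ to simplify the exponentials reproduces the claimed main term. The remaining terms $n\ne n_x$ are bounded using $|\log(x/n)|\gg\min(1,|n-x|/x)$: the worst case $|n-n_x|=1$ has $|\log(x/n)|\asymp 1/x$, and combining with $\Lambda_F(n)\ll n^{\theta_F}$ from axiom (iv) gives a per-term bound $O(x^{1+\theta_F})$. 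Dyadic summation over $1\le|n-n_x|\le n_x$ produces the $\log(2x)$ factor, yielding $O(x^{1+\theta_F}\log(2x))$ in total, while the tail from $n$ far from $x$ contributes the $(\log T)/\log x$ term via the familiar Perron-type estimate for a truncated $\sum_n\Lambda_F(n)n^{-c}/|\log(x/n)|$.

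The horizontal and left vertical pieces are handled by growth bounds on $F'/F$. On the horizontal edges, the Hadamard factorization (a consequence of axiom (i) and the functional equation) combined with the zero-free shift of $T_j$ yields $|F'/F(\sigma+iT_j)|\ll\log^2 T$ uniformly for $-U\le\sigma\le c$; integrating against $x^\sigma$ produces $O(x^c\log T)=O(x^{1+\eps}\log T)$. On the left vertical side, axiom (ii) rewrites $F'/F(s)$ in terms of $\overline{F'/F}(1-s)$ and the logarithmic derivative of the gamma factor, and Stirling shows this piece decays like $x^{-U}$, so it vanishes in the limit $U\to\infty$.

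The main technical obstacle is the uniform control of $F'/F$ on the horizontal edges at $\Im s=T_j$: one needs both a Selberg-class analogue of the Riemann--von Mangoldt partial-fraction expansion for $F'/F$ and a local zero-spacing statement ensuring that the shift $T_j=T+O(1/\log T)$ suffices to stay at distance $\gg 1/\log T$ from zeros. Both ingredients are available in the Selberg-class literature (cf.\ \cite{S3}), but their combination must be executed carefully so that the horizontal contribution does not exceed $x^{1+\eps}\log T$. A secondary delicate point is the adjacent-integer term $n=n_x\pm 1$ in the Dirichlet-series step, where $|\log(x/n)|$ is as small as $1/x$; this is precisely what produces the $x^{1+\theta_F}$ factor in the error and cannot be improved without sharper hypotheses on $\Lambda_F$.
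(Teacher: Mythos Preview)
Your proposal is correct and follows exactly the route the paper takes: the paper gives no self-contained argument but simply says the lemma ``is very similar to Proposition~1 of \cite{MZ}, and with small modifications to that proof we obtain the following result,'' and your outline is precisely that contour-integral argument adapted to the Selberg-class axioms (i)--(v). One cosmetic slip: the horizontal edges with $|F'/F(\sigma+iT_j)|\ll\log^2 T$ actually yield $O(x\log^2 T/\log x)$ after integrating in $\sigma$, not $O(x^c\log T)$ as you wrote, but this is harmless for the stated error terms.
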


Using Lemma \ref{lem1} in place of Lemma 1 of \cite{FZ}, Hypothesis
$Z_F$ in place of Lemma 2 of \cite{FZ}, and following the proof of
Theorem 1 of \cite{FZ}, we obtain a generalization of \eqref{FZ}.

\begin{theorem}\label{theorem0F}
Let $F\in \SS$.  
If $\alpha= \frac{a \log p}{2\pi q}$ for some prime number $p$
and positive integers $a,q$ with $(a,q)=1$, define
$$
g_{F,\alpha}(t)= -\frac1{\pi} \Re \sum_{k=1}^\infty
\frac{\Lambda_F(p^{ak})}{p^{ak/2}}e^{-2\pi i qkt}.
$$
For other $\a$, define $g_{F,\a}(t)=0$ for all $t$.
If Hypothesis $Z_F$ holds, then 
\be\label{uniformF}
\sum_{0< \g \le T} f(\a\g)  = N_F(T) \int_\T f(x) \, dx
+ T \int_\T f(x) g_{F,\a}(x)\, dx +  o(T)
\ee
for all $f \in C^2(\T)$.  Assuming \RHF, \eqref{uniformF} holds for all
absolutely continuous $f$.
\end{theorem}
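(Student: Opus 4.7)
The plan is to follow the proof of Theorem~1 of \cite{FZ} essentially verbatim, substituting Lemma \ref{lem1} for the explicit-formula input (Lemma~1 of \cite{FZ}) and Hypothesis $Z_F$ (or \RHF\ for the second assertion) for the zero-density input (Lemma~2 of \cite{FZ}). First, Fourier-expand
$$
f(x)=\sum_{j\in\Z}\widehat f(j)e^{2\pi ijx},
$$
with $|\widehat f(j)|\ll 1/(1+j^2)$ when $f\in C^2(\T)$ and $|\widehat f(j)|\ll 1/|j|$ when $f$ is only absolutely continuous. Setting $x_j:=e^{2\pi j\a}$ so that $e^{2\pi ij\a\g}=x_j^{i\g}$, interchanging summations yields
$$
\sum_{0<\g\le T}f(\a\g)=\widehat f(0)N_F(T)+\sum_{j\ne 0}\widehat f(j)\sum_{0<\g\le T}x_j^{i\g}.
$$
The zeroth term produces $N_F(T)\int_\T f(x)\,dx$, so everything reduces to evaluating and summing the inner sums.

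For $1\le|j|\le J$, with $J$ taken just below the threshold where Lemma \ref{lem1} becomes trivial (essentially $J\asymp\log T/(2\pi\a(1+\theta_F))$), apply Lemma \ref{lem1} to $\sum_{0<\g\le T} x_j^\rho$. Under \RHF, $\rho=\tfrac12+i\g$ and one simply divides by $x_j^{1/2}$. Under the weaker Hypothesis $Z_F$, split the zero sum at $\Re\rho=\tfrac12+\delta$: zeros farther from the critical line contribute at most $\ll T^{1-A\delta}\log^B T=o(T)$ by partial summation against $N_F(\sigma,T)$, while the zeros near the line are handled by a minor variant of Lemma \ref{lem1}. Either way,
$$
\sum_{0<\g\le T}x_j^{i\g}=-\frac{\Lambda_F(n_{x_j})}{2\pi x_j^{1/2}}\cdot\frac{e^{iT\log(x_j/n_{x_j})}-1}{i\log(x_j/n_{x_j})}+(\text{acceptable error}).
$$
When $\a=a\log p/(2\pi q)$ with $(a,q)=1$ and $q\mid j$, write $j=kq$: then $x_j=p^{ak}$ is an integer, $n_{x_j}=x_j$, the quotient collapses to $T$, and the main term becomes $-T\Lambda_F(p^{ak})/(2\pi p^{ak/2})$. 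Summing over $k\ne 0$ and weighting by $\widehat f(\pm kq)$ reproduces $T\int_\T f(x)g_{F,\a}(x)\,dx$ by Parseval. For every other $j$, $x_j\notin\Z$ and $|\log(x_j/n_{x_j})|$ is bounded below on each dyadic range of $|j|$, so those contributions are $o(T)$ once summed against $\widehat f(j)$.

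The main obstacle will be the tail $|j|>J$. For $f\in C^2$, the rapid decay $|\widehat f(j)|\ll 1/j^2$ combined with the trivial bound $\bigl|\sum_{0<\g\le T}x_j^{i\g}\bigr|\le N_F(T)$ gives a tail of size $\ll N_F(T)/J=o(T)$, which is enough. For the \RHF\ assertion with absolutely continuous $f$, only $|\widehat f(j)|\ll 1/|j|$ is available, so the trivial estimate falls short by a factor of $\log T$; the remedy is partial summation in $j$ exploiting cancellation in $x_j^{i\g}$ across $j$, exactly as in the proof of Corollary~5 of \cite{FZ}. Handling this tail cleanly is the delicate point, but once it is in place the rest of the argument is identical to the $\zeta$-case and produces \eqref{uniformF}.
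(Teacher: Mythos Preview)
Your proposal is correct and matches the paper's approach exactly: the paper gives no detailed proof but simply says to follow Theorem~1 of \cite{FZ}, substituting Lemma~\ref{lem1} for Lemma~1 of \cite{FZ} and Hypothesis~$Z_F$ for Lemma~2 of \cite{FZ}, which is precisely what you outline. Your sketch in fact supplies more detail than the paper itself, including the correct identification of Corollary~5 of \cite{FZ} as the source for the absolutely continuous case under \RHF.
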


Since Hypothesis $Z_F$ holds for Dirichlet $L$-functions $L(s,\chi)$,
we obtain the following.

\begin{corollary}\label{Dirichlet}
Unconditionally, for Dirichlet $L$-functions $F$, \eqref{uniformF} holds
for all $f \in C^2(\T)$. 
\end{corollary}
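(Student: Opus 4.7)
Corollary \ref{Dirichlet} is essentially a direct specialization of Theorem \ref{theorem0F} to $F = L(s,\chi)$. The plan is to verify the two hypotheses that feed into that theorem: (a) each Dirichlet $L$-function lies in the Selberg class $\SS$, and (b) Hypothesis $Z_F$ holds unconditionally for $F = L(s,\chi)$. Once these are in hand, the conclusion for $f \in C^2(\T)$ follows immediately from Theorem \ref{theorem0F}, with no appeal to $\text{RH}_F$.

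\textbf{Step 1: Membership in $\SS$.} For a primitive Dirichlet character $\chi$ modulo $q$, $L(s,\chi)$ is entire (or, in the principal case $\chi=\chi_0$, has a simple pole at $s=1$ handled by $m=1$), and satisfies a functional equation of the required shape with $Q=(q/\pi)^{1/2}$, a single Gamma factor $\Gamma(s/2+\mu)$ with $\mu \in \{0,1/2\}$, and $|w|=1$. The Euler product gives $\Lambda_F(n) = \chi(n)\Lambda(n)$, which satisfies axiom (iv) with any $\theta_F > 0$, and $a_F(n) = \chi(n)$ trivially satisfies (v). An imprimitive $\chi$ differs from its inducing primitive character by finitely many Euler factors, which do not affect zero-counting in the critical strip, so it suffices to treat the primitive case as a primitive element of $\SS$.

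\textbf{Step 2: Verification of Hypothesis $Z_F$.} This is the only nontrivial input, and it is the classical zero-density estimate of Selberg \cite{S1,S2}: uniformly for $\sigma \ge 1/2$ and $T\ge 2$,
$$
N_{L(\cdot,\chi)}(\sigma,T) \ll T^{1-A(\sigma-1/2)} \log T,
$$
with absolute constants $A > 0$ and $B = 1$. This is precisely Hypothesis $Z_F$ for $F = L(s,\chi)$, and it is unconditional. (The implicit constant may depend on the conductor $q$, which is harmless since $F$ is fixed.)

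\textbf{Step 3: Conclusion.} With Steps 1 and 2 verified, Theorem \ref{theorem0F} applies to $F = L(s,\chi)$ unconditionally, yielding \eqref{uniformF} for all $f \in C^2(\T)$, which is exactly the statement of the corollary. There is no real obstacle: the entire content is that Selberg's density theorem supplies Hypothesis $Z_F$ in the Dirichlet case, so that one does not need to assume $\text{RH}_F$ to obtain the $C^2$ version of the asymptotic.
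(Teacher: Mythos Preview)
Your proposal is correct and follows exactly the paper's approach: the paper simply notes that Hypothesis $Z_F$ is known for Dirichlet $L$-functions by Selberg \cite{S1,S2} and then invokes Theorem \ref{theorem0F}. Your Steps 1--3 make explicit the verifications (membership in $\SS$ and Selberg's zero-density estimate) that the paper takes for granted, but the logical structure is identical.
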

  
When $F(s)=L(s,\chi)$ and $\a = \frac{a\log p}{2\pi q}$ with $p$ prime,
$(a,q)=1$, we have
$$
g_{F,\alpha}(t)= -\frac{\log p}{\pi} \Re \left(
\frac{e^{2\pi i(qt+a\xi)}}{p^{a/2}-e^{2\pi i(qt+a\xi)}}\right),
$$
where $\chi(p)=e^{2\pi i \xi}$.
It follows that there is a shortage of zeros of $L(s,\chi)$
with $\{\alpha\g\}$ near $\frac{k-a\xi}{q}$, $k=0,\cdots,q-1$.
We illustrate this phenomenon
with three histograms of $M_F(y;T)$, where
$$
M_F(y) = \frac{T}{N_F(T)}
\Bigg| \sum_{\substack{ 0< \g \le T \\ \{\a \g\} < y}} 
1 - y  N_F(T) \Bigg|,
$$
$F$ a Dirichlet $L$-function
associated with a character of conductor 5 and
$T=500,000$.  For both characters, $N_F(T)=946488$.  The list of zeros
was taken from Michael Rubinstein's data files on his Web page.
In Figure 1 we plot for each subinterval $I=[y,y+\frac{1}{500})$ the value
of $500 (M_F(y+\frac{1}{500})-M_F(y))$ and also the graph of
$g_{F,\a}(y)$.
The characters are identified by their value at 2.

%
%

\afterpage{\clearpage}
\begin{figure}[t]
\parbox{.8\linewidth}{\subfigure{
\epsfig{file=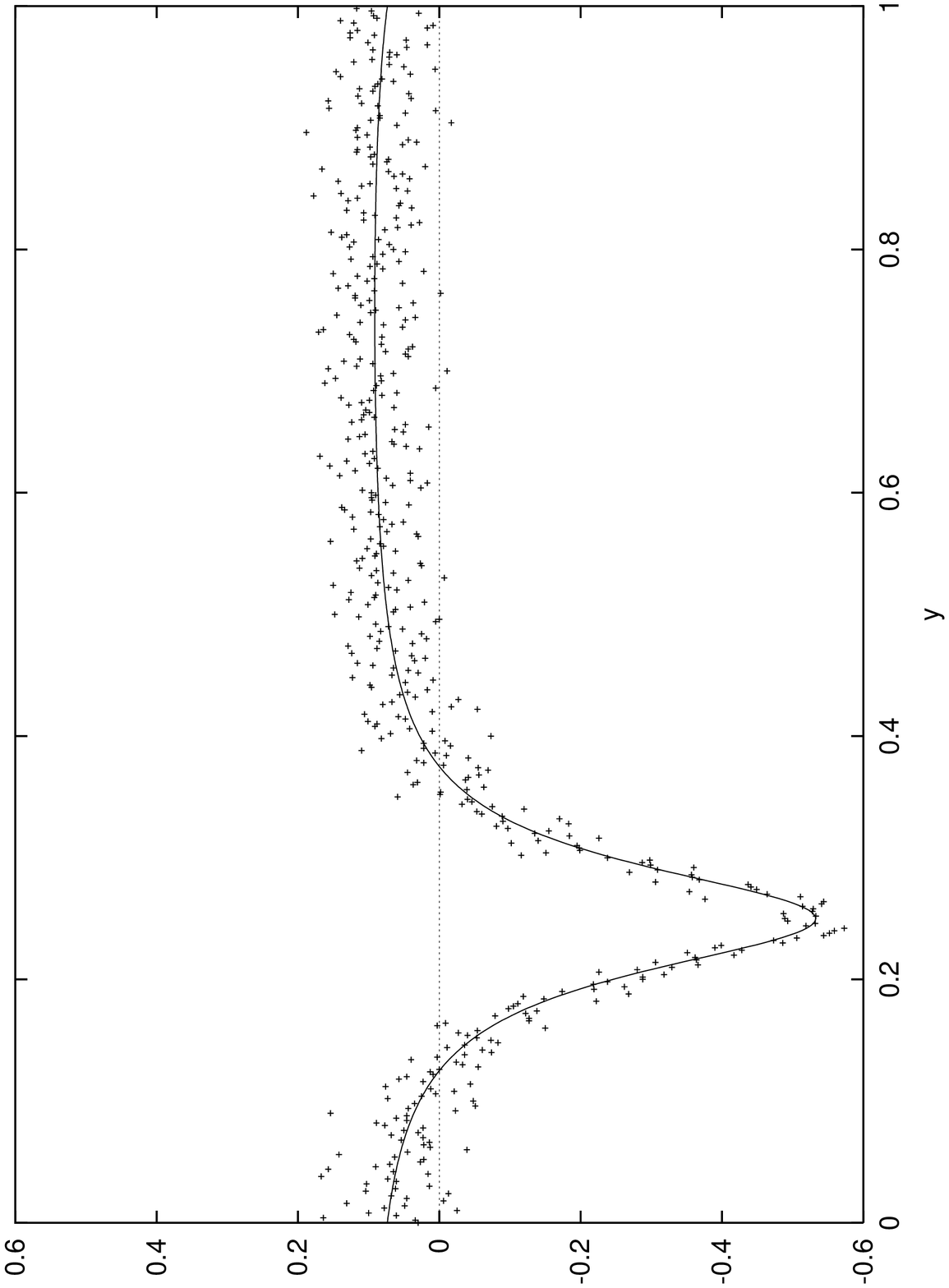, height=3.0in, width=1.8in, angle=270}}}
\nts\nts\nts\nts\nts\nts\nts\nts$\ds \a=\tfrac{\log 2}{2\pi}, \chi(2)=-i$ 
\parbox{.8\linewidth}{\subfigure{
\epsfig{file=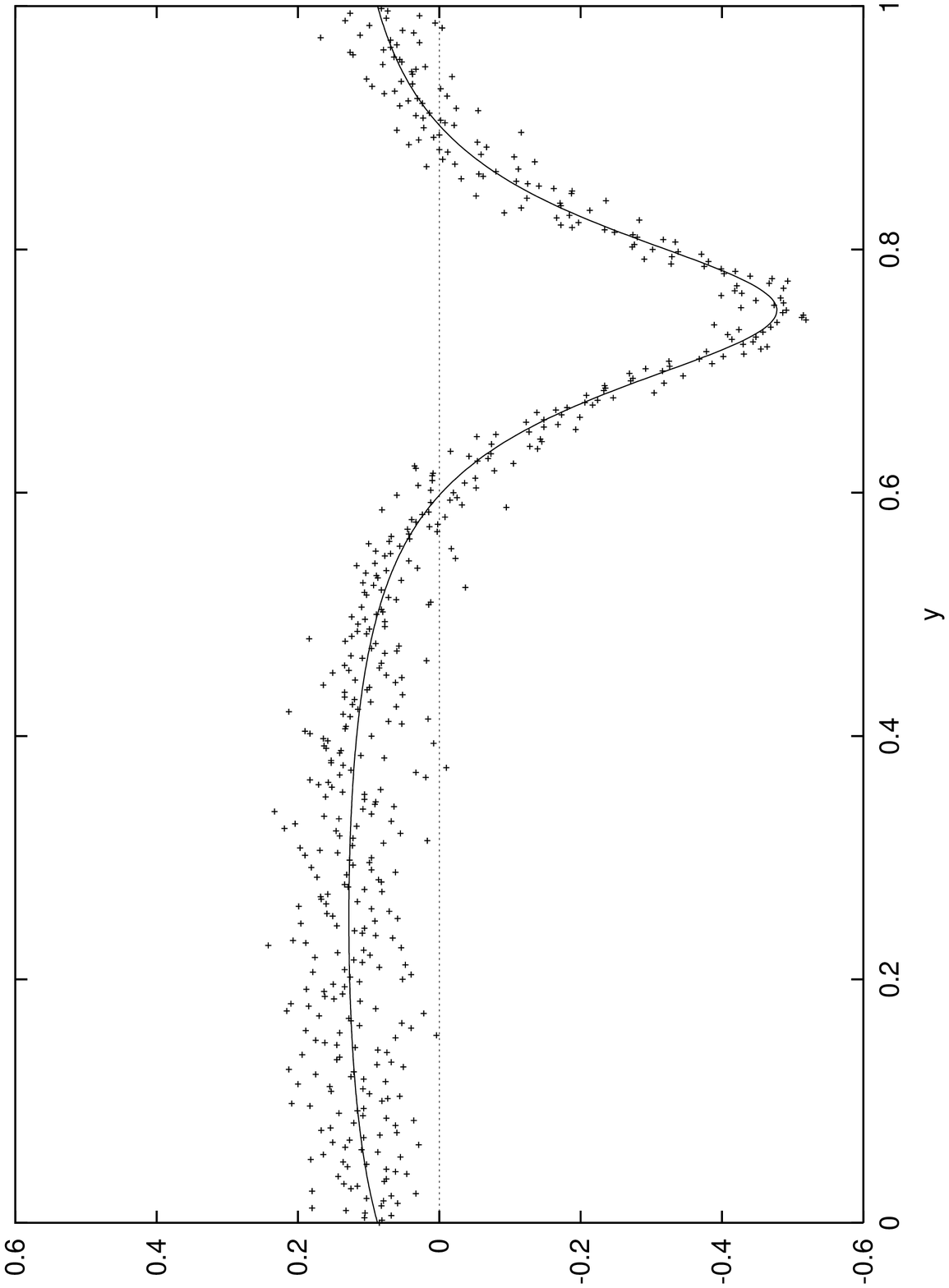, height=3.0in, width=1.8in, angle=270}}}
\nts\nts\nts\nts\nts\nts\nts\nts$\ds \a=\tfrac{\log 3}{2\pi}, \, \chi(2)=-i$
\parbox{.8\linewidth}{\subfigure{
\epsfig{file=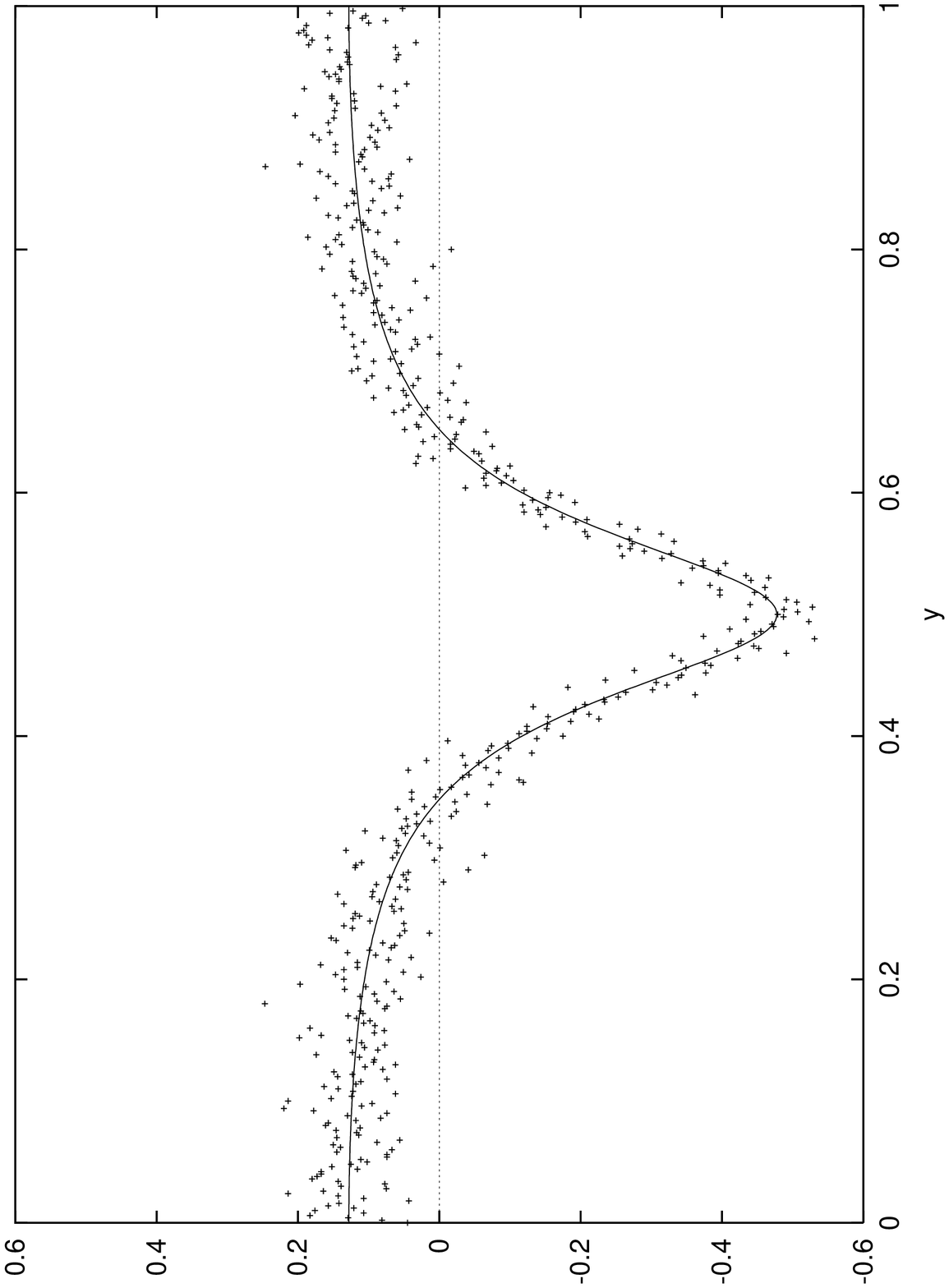, height=3.0in, width=1.8in, angle=270}}}
\nts\nts\nts\nts\nts\nts\nts\nts$\ds \a=\tfrac{\log 3}{2\pi}, \, \chi(2)=-1$
\caption{$500(M_F(y+\frac{1}{500})-M_F(y))$ vs. $g_{F,\a}(y)$ for
$T=500000$.}
\end{figure}

We conjecture that \eqref{uniformF} holds when $f$ is the indicator
function of an interval, and are thus led to the following
generalizations of Conjectures \ref{conj1} and \ref{conj2}.
Here $D_{F,\a}$ is the natural generalization of the discrepancy
function $D_\a$.

\begin{conjecture}\label{conj1F}
Let $\II$ be an interval of $\T$.  Then 
$$
\sum_{\substack{ 0< \g \le T \\ \{\a \g\} \in \II}} 
1 = |\II| N_F(T) + T \int_{\II} g_{F,\a}(x) dx + o(T).
$$
\end{conjecture}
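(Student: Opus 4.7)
The plan is to prove Conjecture \ref{conj1F} conditionally, transplanting the implication ``Conjecture \ref{conj3} implies Conjecture \ref{conj1}'' (i.e., Theorem \ref{123}) from $\zeta(s)$ to an arbitrary primitive $F \in \SS$. The decisive input will be a natural $F$-analog of Conjecture \ref{conj3}: uniformly for $T^{c_F}/(\log T)^{C} \le x \le T^{A}$, where $c_F$ depends only on $d_F$ and $\theta_F$, one has $\sum_{0 < \g \le T} x^{i\g} = o(T)$. For $F = \zeta$ this is exactly Conjecture \ref{conj3}, and Theorems \ref{theoremshort} and \ref{theoremPC} suggest the same statement can be derived for general $F$ either from an $F$-pair correlation conjecture or from a short-interval law for $\Lambda_F$.

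Granting this hypothesis and assuming \RHF, I would proceed by Beurling--Selberg approximation exactly as in the proof of Theorem \ref{theorem1}(ii). Fix an integer $J$, and let $h^{\pm}(u) = \sum_{|j| \le J} c_j^{\pm} e^{2\pi i j u}$ be trigonometric majorants and minorants of $\chi_{\II}(\{u\})$ satisfying $c_0^{\pm} = |\II| \pm 1/(J+1)$ and $|c_j^{\pm}| \le 1/|j|$. Writing $x_j = e^{2\pi j \a}$,
$$
\sum_{0 < \g \le T} h^{\pm}(\a \g) = c_0^{\pm} N_F(T) + \sum_{1 \le |j| \le J} c_j^{\pm} \sum_{0 < \g \le T} x_j^{i\g}.
$$
For $|j| \le J_0$, where $J_0$ is chosen so that the remainder in Lemma \ref{lem1} (divided by $x_j^{1/2}$) is $o(T)$, I would invoke Lemma \ref{lem1} to extract the main term $V_j = -\Lambda_F(n_{x_j})(e^{iT \log(x_j/n_{x_j})} - 1) / (2\pi i\, x_j^{1/2} \log(x_j/n_{x_j}))$. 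When $\a = (a \log p)/(2\pi q)$ with $(a,q) = 1$, indices $j = kq$ give $V_{kq}$ which are precisely the Fourier coefficients of $-T g_{F,\a}$, so the subsum with $q \mid j$ assembles into $T \int_{\T} h^{\pm} g_{F,\a}$. Indices with $q \nmid j$ contribute $o(T)$ individually and in aggregate, since $x_j$ stays bounded away from integers (so $\Lambda_F(n_{x_j})/x_j^{1/2}$ decays) and the normalized oscillatory factor $(e^{iT \log(x_j/n_{x_j})}-1)/(iT \log(x_j/n_{x_j}))$ tends to $0$ for each fixed $j$. For the tail $J_0 < |j| \le J$, the $F$-analog of Conjecture \ref{conj3} gives each inner sum to be $o(T)$, so the tail contributes $o(T \log(J/J_0))$. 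Taking $J = \lfloor \lambda \log T \rfloor$, letting $T \to \infty$ for fixed $\lambda$ and then $\lambda \to \infty$, the Beurling--Selberg truncation loss $| c_0^{\pm} - |\II| | \cdot N_F(T) = O(N_F(T)/\lambda)$ and the smoothing error $T \int_{\T} (h - h^{\pm}) g_{F,\a}$ (bounded by $T \| g_{F,\a} \|_{\infty}/(J+1)$) both become negligible relative to $T$, yielding Conjecture \ref{conj1F}.

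The main obstacle is the $F$-analog of Conjecture \ref{conj3} itself. The pair-correlation route (via Lemma \ref{PCY}) rests only on formal identities and the count $N_F(T) \asymp T \log T$, and therefore should extend to $\SS$ essentially verbatim under Hypothesis $Z_F$ once a suitable $F$-pair-correlation conjecture is formulated. The short-interval route is more delicate: it requires both an $F$-version of von Koch's bound $\psi(x) = x + O(x^{1/2} \log^2 x)$ and a plausible conjectural statement about $\psi_F(x+y) - \psi_F(x)$. The former follows from the $F$-explicit formula under \RHF, but the latter is only well-understood for restricted subclasses of $\SS$, such as automorphic $L$-functions with known Rankin--Selberg data. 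The deepest step is therefore the arithmetic control of $\sum_{0 < \g \le T} x^{i\g}$ for $x \in [T^{c_F}, T^{A}]$, not the Fourier-analytic bookkeeping described above.
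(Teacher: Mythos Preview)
The statement is a \emph{conjecture}, and the paper does not prove it unconditionally; what the paper does establish is the conditional implication recorded as Theorem~\ref{123F} (under \RHF, Conjecture~\ref{conj3F} implies Conjecture~\ref{conj1F}), whose proof it indicates follows the argument for Theorems~\ref{theorem1}(ii) and~\ref{123} with the single modification $J_0 = \lfloor (\tfrac{\log T}{1/2+\theta_F}-5\log\log T)/(2\pi\alpha) \rfloor$.  Your proposal reproduces exactly this conditional argument---Beurling--Selberg majorants/minorants, Lemma~\ref{lem1} for $|j|\le J_0$, the hypothesized bound $\sum_{0<\g\le T} x_j^{i\g}=o(T)$ for $J_0<|j|\le J$, and then $J=\lfloor\lambda\log T\rfloor$ with $\lambda\to\infty$---so it is correct and matches the paper's route essentially verbatim; your added discussion of how one might \emph{supply} the $F$-analog of Conjecture~\ref{conj3} via pair correlation or short intervals is also in line with the paper's Theorems~\ref{theoremshortF} and~\ref{theoremPCF}.
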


\begin{conjecture}\label{conj2F} 
We have 
$$
D_{F,\a}(T) = \frac{T}{N_F(T)} \sup_{\II} \Big| \int_{\II} g_{F,\a}(x)
\, dx \Big| + o\Big(\frac{1}{\log T} \Big).
$$
\end{conjecture}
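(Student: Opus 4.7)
The plan is to deduce Conjecture \ref{conj2F} from Conjecture \ref{conj1F} by a matching upper and lower bound for $D_{F,\a}(T)$, parallel to how Conjecture \ref{conj2} follows from Conjecture \ref{conj1} via Theorem \ref{theorem1}(i) in the $\zeta$-case. For the upper bound, Conjecture \ref{conj1F} gives, uniformly in intervals $\II\subset\T$,
\benn
\Big|\frac{1}{N_F(T)}\sum_{\substack{0<\g\le T\\ \{\a\g\}\in\II}} 1 - |\II|\Big| \le \frac{T}{N_F(T)}\Big|\int_\II g_{F,\a}(x)\,dx\Big| + o\!\left(\frac{1}{\log T}\right),
\eenn
after dividing through by $N_F(T)\asymp T\log T$; taking the supremum over $\II$ yields the claimed upper bound on $D_{F,\a}(T)$.

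For the lower bound I would generalize the argument of Theorem \ref{theorem1}(i) to the Selberg class. Assume Hypothesis $Z_F$, so that Theorem \ref{theorem0F} applies to all $C^2$ test functions. Choose an interval $\II^*$ attaining $\sup_\II |\int_\II g_{F,\a}|$, fix a small $\eps>0$, and let $h_\eps\ge 0$ be a smooth bump on $\T$ supported in $[0,\eps]$ with $\int_\T h_\eps =1$. The convolution $f(x)=\int_\T h_\eps(y)\chi_{\II^*}(x-y)\,dy$ is in $C^2(\T)$, so Theorem \ref{theorem0F} applies, and integrating against $h_\eps$ gives
\benn
\int_\T h_\eps(y)\!\left(\sum_{\substack{0<\g\le T\\ \{\a\g\}\in\II^*+y}} 1 - |\II^*|N_F(T)\right)\!dy = T\int_\T h_\eps(y)\int_{\II^*+y} g_{F,\a}(x)\,dx\,dy + o(T).
\eenn
Axiom (iv) of the Selberg class, $\Lambda_F(n)\ll n^{\th_F}$ with $\th_F<1/2$, ensures that the defining Fourier series of $g_{F,\a}$ converges absolutely, so $g_{F,\a}$ is bounded on $\T$; hence the inner integral differs from $\int_{\II^*} g_{F,\a}$ by $O(\eps)$, the right side equals $T\int_{\II^*}g_{F,\a} + o(T) + O(\eps T)$, and some $y\in(0,\eps)$ witnesses the lower bound $T|\int_{\II^*} g_{F,\a}| + o(T) + O(\eps T)$ for the absolute discrepancy at the interval $\II^*+y$. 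Dividing by $N_F(T)$ and letting $\eps\to 0$ completes the matching lower bound.

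The main obstacle is the hypothesis required for the lower-bound step: Theorem \ref{theorem0F} needs either Hypothesis $Z_F$ or \RHF\ for the given $F\in\SS$, and unconditionally one would need a Selberg-type zero-density estimate for $F$, which at present is available only in special cases (Dirichlet $L$-functions and certain degree-$2$ cases cited before Theorem \ref{theorem0F}). Modulo Hypothesis $Z_F$, the argument mirrors the $\zeta$-case line for line, with the bound $\th_F<1/2$ playing the role that the explicit closed form in \eqref{galpha} plays in keeping $g_{\a}$ bounded.
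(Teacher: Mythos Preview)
The statement is a \emph{conjecture}; the paper does not prove it. What the paper does say (in the $\zeta$-case, just before Conjecture~\ref{conj2}) is that Conjecture~\ref{conj1} ``clearly implies'' Conjecture~\ref{conj2}, and the identical one-line implication holds for general $F$: if Conjecture~\ref{conj1F} holds with the $o(T)$ uniform in $\II$, then dividing by $N_F(T)\asymp T\log T$ and taking the supremum over $\II$ yields Conjecture~\ref{conj2F} directly --- both the upper \emph{and} the lower bound, since the uniform $o(T)$ survives the supremum on both sides. Your upper-bound paragraph is exactly this argument.

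Your lower-bound paragraph is also correct, but it is doing something different and unnecessary for the stated goal. What you have written is a reproduction of the paper's proof of Theorem~\ref{theorem1}(i), transported to $F\in\SS$; this is precisely Theorem~\ref{theorem1F}(i), stated and attributed to the same argument in the paper. That result gives the lower half of Conjecture~\ref{conj2F} under Hypothesis~$Z_F$ \emph{without} assuming Conjecture~\ref{conj1F}. But once you assume Conjecture~\ref{conj1F}, as you announce in your first sentence, the lower bound already follows from the same uniform asymptotic that gave the upper bound, and no appeal to Theorem~\ref{theorem0F} or Hypothesis~$Z_F$ is required. So the ``main obstacle'' you flag at the end is an obstacle only for proving the lower bound independently of Conjecture~\ref{conj1F} (which is the point of Theorem~\ref{theorem1F}(i)), not for the implication Conjecture~\ref{conj1F} $\Rightarrow$ Conjecture~\ref{conj2F} that you set out to establish.
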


Combining Theorem \ref{theorem0F} and the proof of Theorem \ref{theorem1},
we obtain the following.  The only difference in the proof is that
here we take
$$
J_0 = \left\lfloor \frac{\tfrac{\log T}{1/2+\theta_F}-5\log\log T}{2\pi \a} 
\right\rfloor.
$$

\begin{theorem}\label{theorem1F}
(i) Assuming Hypothesis $Z_F$, we have 
$$
D_{F,\a}(T) \ge \frac{T}{N_F(T)}  \sup_{\II} \Big| \int_{\II}
g_{F,\a}(x) \, dx \Big|  + o\Big(\frac{1}{\log T} \Big).
$$

(ii) Assuming \RHF, for any interval $\II$ of $\T$ we have 
$$
\Big| \sum_{\substack{ 0<\g \le T \\ \{\a \g\} \in \II}} 1 
- |\II| N_F(T) - T\int_{\II} g_{F,\a}(x) dx \Big| \le  
\a(1/2+\theta_F) T + o(T).
$$
\end{theorem}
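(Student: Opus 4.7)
The plan is to run the two-part proof of Theorem \ref{theorem1} almost verbatim, substituting Theorem \ref{theorem0F} for its $\zeta$-analogue (Corollary 2 of \cite{FZ}) and Lemma \ref{lem1} for the explicit sum \eqref{Landau}; the only quantitative change is that $J_0$ must be taken smaller, reflecting the weaker $x^{1+\theta_F}$ error in Lemma \ref{lem1}. For part (i), I would let $\II$ be an interval maximizing $|\int_\II g_{F,\a}|$ and smooth $\chi_\II$ by convolving with a nonnegative $C^\infty$ bump $h_\eps$ of integral $1$ supported in $[0,\eps]$. The convolution $f = h_\eps * \chi_\II$ lies in $C^2(\T)$, so Theorem \ref{theorem0F} (valid under Hypothesis $Z_F$) supplies the asymptotic for $\sum_\g f(\a\g)$. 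Averaging over translates $\II+y$, $0\le y\le\eps$, and using that $g_{F,\a}$ is bounded (its defining Fourier series converges absolutely because $\Lambda_F(n)\ll n^{\theta_F}$ with $\theta_F<1/2$), one extracts a translate for which the stated discrepancy lower bound holds up to $O(\eps)$; letting $\eps\to 0$ concludes part (i).

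For part (ii), assume \RHF{} and form Beurling--Selberg majorants/minorants $h^\pm$ of degree $J$ sandwiching the indicator of $\II$. With $x_j = e^{2\pi j\a}$ and $\rho = 1/2 + i\g$, Lemma \ref{lem1} yields
\[
\sum_{0<\g\le T} x_j^{i\g} = V_{F,j} + O_\eps\!\left( x_{|j|}^{1/2+\theta_F}\log x_{|j|} + x_{|j|}^{1/2+\eps}\log T \right),
\]
where $V_{F,j}$ is the analogue of $V_j$ with $\Lambda$ replaced by $\Lambda_F$. The error is $o(T)$ precisely when $x_{|j|}\le T^{1/(1/2+\theta_F)}/\log^{c}T$, which dictates the choice $J_0 = \lfloor (\log T/(1/2+\theta_F) - 5\log\log T)/(2\pi\a)\rfloor$ and the setting $J = J_0$, rendering the ``tail'' range $|j|>J_0$ empty.

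The remaining steps mirror the zeta case. The sum $\sum_{j\ne 0} c_j^\pm V_{F,j}$ equals $T\int_\T h^\pm g_{F,\a} + o(T)$ upon separating indices $j$ with $q\mid j$ (which assemble the Fourier spectrum of $g_{F,\a}$ when $\a = a\log p/(2\pi q)$, since $\Lambda_F$ is supported on prime powers) from indices with $q\nmid j$ (for which no $x_j$ is an integer, yielding $o(T)$ by a dominated-convergence argument that uses $\Lambda_F(n)\ll n^{\theta_F}$ with $\theta_F<1/2$); when $\a$ is not of the special form, all $x_j$ are non-integers and the entire sum is $o(T)$ by the same argument. Combined with $|\int_\T(h-h^\pm)g_{F,\a}| = O(1/(J+1)) = O(1/\log T)$ and the main discrepancy contribution $\pm N_F(T)/(J+1)$ from $c_0^\pm$, one obtains the stated bound via $N_F(T)\sim d_F T\log T/(2\pi)$ and the value of $J_0$. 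The only genuine obstacle is the $x^{1+\theta_F}$ error in Lemma \ref{lem1}, strictly weaker than the $x\log^2(Tx)$ error in \eqref{Landau}; this forces the smaller cut-off $J_0$ and replaces the zeta-case bound $\a T/2$ by a bound proportional to $\a(1/2+\theta_F)T$.
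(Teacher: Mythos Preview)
Your proposal is correct and follows the paper's own approach exactly: the paper's proof of Theorem \ref{theorem1F} consists of the single remark that one combines Theorem \ref{theorem0F} with the argument for Theorem \ref{theorem1}, the only change being the smaller cut-off $J_0 = \lfloor (\log T/(1/2+\theta_F) - 5\log\log T)/(2\pi\a)\rfloor$ forced by the $x^{1+\theta_F}$ error in Lemma \ref{lem1}. You have reproduced precisely this argument and correctly identified the key modification.
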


We can prove a direct analog of Theorem \ref{123}, by requiring a
slightly larger range of $T$ in the analog of Conjecture \ref{conj3},
since $\theta_F$ may be large.

\begin{conjecture}\label{conj3F}
Let $A >1$ be a fixed real number.  Uniformly for 
$$
\frac{T^{1/(1/2+\theta_F)}}{\log^{5} T} \le x\le T^{A},
$$ 
we conjecture that
\be\label{sumxgF}
\sum_{0<\g \le T} x^{i\g} = o(T). 
\ee
\end{conjecture}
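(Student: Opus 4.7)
The plan is to parallel the two routes used in the $\zeta$-case (Theorems \ref{theoremshort} and \ref{theoremPC}), deriving \eqref{sumxgF} either from an analog of Conjecture \ref{primeshort} for the generalized von Mangoldt function $\Lambda_F$, or from an analog of the Pair Correlation Conjecture for zeros of $F$. Both routes pass through an explicit-formula relationship between $\sum_{|\g|\le T} x^{i\g}$ and averages of $\psi_F(x) - r_F x$, where $\psi_F(x) := \sum_{n\le x} \Lambda_F(n)$ and $r_F$ is the residue of $-F'/F$ at $s=1$ (zero when $F$ is entire).

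First I would build the Beurling--Selberg entire function $\Phi_K$ of exponential type $K$ approximating $\chi_{[0,1]}$ with pointwise error $\ll (1+K|x|)^{-3} + (1+K|1-x|)^{-3}$, exactly as at the start of the proof of Theorem \ref{theoremshort}. Using \eqref{NFT} in place of \eqref{NT} gives
$$
\sum_{0<\g\le T} x^{i\g} \,=\, \sum_{|\g|\le x} x^{i\g}\Phi_K(\g/T) \,+\, O\!\( \frac{T\log T}{K} \).
$$
Fourier-inverting $\Phi_K$, integrating by parts in $v$, and applying the analog of \eqref{explicit} for $F\in\SS$ (obtained by shifting the contour past $\re(s)=\tfrac12$ under \RHF, keeping track of trivial zeros and $\Gamma$-factor poles) reduces the main contribution to
$$
\frac{-T}{2\pi\sqrt{x}}\int_{-K}^{K} \hat\Phi_K'(v) \bigl( G_F(xe^{2\pi v/T}) - G_F(x) \bigr)\, dv,
$$
with $G_F(x):=\psi_F(x) - r_F x$. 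Choosing $K=\log^2 T$, this is controlled by $\max_{|u-x|\le xK/T}|G_F(u)-G_F(x)|$.

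At this point the required hypothesis is the natural $F$-analog of Conjecture \ref{primeshort}: for every $\eps>0$ and $y\le x^{1-\eps}$,
$$
\psi_F(x+y) - \psi_F(x) \,=\, r_F y + o\bigl( x^{1/2+\theta_F}/\log\log x \bigr).
$$
The exponent $1/2+\theta_F$ (rather than $1/2$) is precisely what forces the lower cutoff $T^{1/(1/2+\theta_F)}/\log^{5} T$ in the conjecture: the inequality $\tfrac{T}{\sqrt{x}}\cdot x^{1/2+\theta_F} = o(T)$ is equivalent to $x \gg T^{1/(1/2+\theta_F)}$ up to logarithmic factors coming from $K$. Granted this short-interval estimate, \eqref{sumxgF} then follows as in Theorem \ref{theoremshort}.

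The main obstacle, as in the $\zeta$-case, is proving any such short-interval estimate for $\Lambda_F$; it is already open for $F=\zeta$. A parallel pair-correlation route would adapt Lemma \ref{PCY} to $\SS$ by combining Lemma \ref{lem1} with a smoothed Montgomery-style explicit formula for $-F'/F$, and then would assume the $F$-analog of Conjecture \ref{conjPC} for the normalized form factor $N_F(T)^{-1} \sum_{0<\g,\g'\le T} 4 x^{i(\g-\g')}/(4+(\g-\g')^2)$ in the same range of $x$. The translation from the $\zeta$-case is largely mechanical; the genuine difficulty is that both routes ultimately rest on a substantive unproved input about either primes or zero-correlations associated to $F$.
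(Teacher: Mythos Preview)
The statement is a conjecture, not a theorem; the paper does not prove it but rather shows (Theorems \ref{theoremshortF} and \ref{theoremPCF}) that it follows from either Conjecture \ref{primeshortF} or Conjecture \ref{conjPCF2}. Your proposal is essentially a sketch of those two conditional results, and at that level the plan is correct and matches the paper.

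However, your explanation of the lower cutoff $T^{1/(1/2+\theta_F)}/\log^{5}T$ is wrong, and this leads you to the wrong short-interval hypothesis. You claim the $F$-analog of Conjecture \ref{primeshort} should read
\[
\psi_F(x+y)-\psi_F(x)=r_F y + o\bigl(x^{1/2+\theta_F}/\log\log x\bigr),
\]
and that this exponent forces the cutoff via ``$\tfrac{T}{\sqrt{x}}\cdot x^{1/2+\theta_F}=o(T)\iff x\gg T^{1/(1/2+\theta_F)}$''. That computation is simply false: the left side equals $T x^{\theta_F}$, which \emph{grows} with $x$ and is never $o(T)$. In fact a short-interval bound with exponent $1/2+\theta_F$ would be useless in the Beurling--Selberg argument. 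The paper's Conjecture \ref{primeshortF} has exponent $1/2$, exactly as in the $\zeta$-case; under \RHF\ one still has $x^{\rho}=x^{1/2}x^{i\gamma}$, so the factor in front of $G_F(xe^{2\pi v/T})-G_F(x)$ is $T/\sqrt{x}$, and an $o(x^{1/2}/\log\log x)$ bound suffices to get $o(T)$.

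The lower cutoff has nothing to do with the short-interval input. It marks the threshold below which Lemma \ref{lem1} already gives \eqref{sumxgF} under \RHF: dividing the error term $x^{1+\theta_F}\log(2x)+x^{1+\eps}\log T$ by $x^{1/2}$ yields $x^{1/2+\theta_F}\log(2x)+x^{1/2+\eps}\log T$, which is $o(T)$ precisely when $x\le T^{1/(1/2+\theta_F)}/\log^{5}T$ (up to the $\eps$). Above that threshold one needs the extra hypothesis, and the range in Conjecture \ref{conj3F} is chosen so that Lemma \ref{lem1} and the conjecture together cover all $x$. This is also why, in the proof of Theorem \ref{theorem1F}, the paper replaces $J_0$ by $\lfloor ((\log T)/(1/2+\theta_F)-5\log\log T)/(2\pi\alpha)\rfloor$.
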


\begin{theorem}\label{123F}
Assume \RHF.  Then Conjecture \ref{conj3F} implies Conjectures
\ref{conj1F} and \ref{conj2F}.
\end{theorem}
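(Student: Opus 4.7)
The plan is to imitate the proof of Theorem \ref{123} line by line, substituting the generalized ingredients already developed for $F \in \SS$. First I would fix an interval $\II \subset \T$ with indicator function $h$ and bracket $h$ above and below by Beurling--Selberg trigonometric polynomials $h^\pm$ of degree $J$, with Fourier coefficients $c_0^\pm = |\II| \pm \frac{1}{J+1}$ and $|c_j^\pm| \le \frac{1}{|j|}$ for $j\ne 0$. Writing $x_j = e^{2\pi j \a}$, the starting identity is
$$
\sum_{0 < \g \le T} h^\pm(\a\g) = c_0^\pm N_F(T) + \sum_{1 \le |j| \le J} c_j^\pm \sum_{0 < \g \le T} x_j^{i\g}.
$$

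Next I would split the $j$-range at the cutoff $J_0 = \lfloor (\tfrac{\log T}{1/2+\theta_F} - 5\log\log T)/(2\pi\a)\rfloor$ introduced in the proof of Theorem \ref{theorem1F}(ii); it is calibrated so that $x_{J_0}^{1/2+\theta_F} \asymp T/(\log T)^5$. For $1 \le |j| \le J_0$ I would apply Lemma \ref{lem1} (with $\rho = \tfrac12 + i\g$ under \RHF) to obtain
$$
\sum_{0 < \g \le T} x_j^{i\g} = V_j + O\!\left(x_j^{1/2+\theta_F}\log(2x_j) + x_j^{1/2+\eps}\log T\right),
$$
where $V_j$ is the natural analog of the main term in the $\zeta$ case. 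Because the errors grow geometrically in $j$, their total weighted by $|c_j^\pm| \le 1/|j|$ is dominated by the term at $j=J_0$, hence is $O\!\left(J_0^{-1} \cdot T/\log^4 T\right) = o(T)$. The main-term sum $\sum_{1\le |j| \le J_0} c_j^\pm V_j$ is then handled exactly as in the $\zeta$ case: the indices with $q \mid j$ assemble into $T\int_\T h^\pm g_{F,\a}$ via the explicit formula for $g_{F,\a}$ supplied by Theorem \ref{theorem0F}, while the $q \nmid j$ indices form a tail that is $o(T)$ because the associated series converges uniformly in $T$ and each summand tends to $0$.

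For $J_0 < |j| \le J$ with $J = \lfloor \lambda \log T\rfloor$ and $\lambda$ fixed, the values $x_j$ lie in $[T^{1/(1/2+\theta_F)}/\log^5 T,\, T^{2\pi\a\lambda}]$, so Conjecture \ref{conj3F} with $A=2\pi\a\lambda$ gives $\sum_{0<\g\le T} x_j^{i\g} = o(T)$ uniformly. Summing against $|c_j^\pm|\le 1/|j|$ contributes $o(T\log(J/J_0)) = o(T)$. Combining these with the trivial bound $\int_\T |h - h^\pm|\cdot\|g_{F,\a}\|_\infty \ll 1/\log T$ produces the sandwich
$$
\Big| \sum_{\substack{0 < \g \le T \\ \{\a\g\} \in \II}} 1 - |\II| N_F(T) - T \int_{\II} g_{F,\a}(x)\,dx \Big| \le \frac{N_F(T)}{J+1} + o(T),
$$
uniformly in $\II$. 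Sending first $T \to \infty$ and then $\lambda \to \infty$ yields Conjecture \ref{conj1F}, and taking the supremum over $\II$ yields Conjecture \ref{conj2F}.

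The main technical obstacle is purely bookkeeping: verifying that the separation of indices by divisibility by $q$ actually reproduces the generalized weight $g_{F,\a}$ rather than $g_{\a}$, and that the non-multiples of $q$ genuinely contribute $o(T)$. Both points amount to reusing the same Fourier calculation that defines $g_{F,\a}$ in the statement of Theorem \ref{theorem0F}, so no essentially new idea is required — only care to track the coefficients $\Lambda_F(p^{ak})$ instead of $\Lambda(p^{ak})$ through the argument, and to verify that the enlarged error $x^{1/2+\theta_F}$ in Lemma \ref{lem1} (compared with $x^{1/2}$ in the $\zeta$ case) is precisely what forces the modified choice of $J_0$ and the modified range in Conjecture \ref{conj3F}.
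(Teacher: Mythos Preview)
Your proposal is correct and follows essentially the same route as the paper. The paper does not write out a separate proof of Theorem~\ref{123F}; it simply indicates that one should repeat the argument for Theorems~\ref{theorem1}(ii) and~\ref{123} with Lemma~\ref{lem1} in place of \eqref{Landau} and with the modified cutoff $J_0 = \lfloor (\tfrac{\log T}{1/2+\theta_F}-5\log\log T)/(2\pi\a)\rfloor$, then take $J=\lfloor \lambda\log T\rfloor$ and let $\lambda\to\infty$ using Conjecture~\ref{conj3F} on the range $J_0<|j|\le J$---which is precisely what you have outlined.
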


The analog of Theorem \ref{theoremshort} holds for $F\in \SS$, by
following the proof given in the preceding section.  Here
we need an explicit formula similar to \eqref{explicit}.  By standard
contour integration methods, one obtains
$$
G_F(x) := \sum_{n\le x} \Lambda_F(n) - d_F x = - \sum_{|\rho| \le Q}
\frac{x^\rho}{\rho} + O(x^{\theta_F}\log x) 
$$
provided $Q \ge x\log x$.  Since $\theta_F<\frac12$, the error term is
acceptable. 

\begin{conjecture}\label{primeshortF}
For every $\eps>0$, if $x$ is large and $y\le x^{1-\epsilon}$, then
$$
G_F(x+y) - G_F(x) = o(x^{\frac 12}/\log \log x).
$$
\end{conjecture}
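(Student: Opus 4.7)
The plan is to adapt the argument from the second half of Theorem \ref{theoremshort} to the Selberg-class setting, working conditionally under \RHF\ plus a zero-sum hypothesis of the strength of Conjecture \ref{conj3F}. A truly unconditional proof of the bound as stated appears to be beyond present methods, so I view this as a proposal for a conditional result whose conjectural character matches that of its $\zeta$-analogue, Conjecture \ref{primeshort}.

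The starting point is the explicit formula for $G_F$ stated just above the conjecture, applied with $Q=x\log x$ at both $z=x$ and $z=x+y$, which gives
$$
G_F(x+y) - G_F(x) = -\sum_{|\g|\le x} \frac{(x+y)^{\rho} - x^{\rho}}{\rho} + O(x^{\th_F}\log x).
$$
Under \RHF\ the error is swallowed since $\th_F<1/2$. I would then split the sum at $|\g|=x^{\eps/2}$. For $|\g|\le x^{\eps/2}$, the Taylor expansion $(x+y)^{\rho}-x^{\rho}=x^{\rho}\bigl(\rho y/x+O(|\rho|^2 y^2/x^2)\bigr)$, combined with $N_F(x^{\eps/2})\ll x^{\eps/2}\log x$ and $y\le x^{1-\eps}$, bounds this portion by $O(x^{1/2-\eps/2}\log x)$, which is negligible.

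For $x^{\eps/2}<|\g|\le x$, I would apply partial summation exactly as in the proof of the second part of Theorem \ref{theoremshort}, reducing the estimate to uniform control of $\sum_{0<\g\le t} x^{i\g}$ for $t\in[x^{\eps/2},x]$. Assuming \RHF\ together with the analog of Conjecture \ref{conj3F}, this procedure delivers
$$
G_F(x+y)-G_F(x)=o(x^{1/2}\log x),
$$
in direct parallel with the second assertion of Theorem \ref{theoremshort}.

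The main obstacle is quantitative: the conjectured bound $o(x^{1/2}/\log\log x)$ is sharper by a factor of $\log x\cdot\log\log x$ than what this partial-summation argument can extract from a merely qualitative assumption $\sum_{0<\g\le t}x^{i\g}=o(t)$. Closing this gap appears to require power-saving estimates on the sums over zeros, in the spirit of Tsz Ho Chan's strengthening of the pair-correlation conjecture, or equivalently substantially stronger cancellation among the $\Lambda_F$-values in short intervals than \RHF\ alone provides. I do not see how to obtain this with current techniques, which is why the statement is advanced only as a conjecture.
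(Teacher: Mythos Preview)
The statement you were asked to address is a \emph{conjecture}, not a theorem: the paper does not prove it and does not attempt to. It is introduced as the $F$-analogue of Conjecture \ref{primeshort} and then used as a hypothesis in Theorem \ref{theoremshortF}. You recognize this clearly, and your final paragraph correctly diagnoses why a proof is out of reach.

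What you have actually written is a proof sketch for the \emph{second assertion of Theorem \ref{theoremshortF}} (the converse direction), not a proof of Conjecture \ref{primeshortF}. That sketch is correct and matches the paper's approach exactly: the paper says the analog of Theorem \ref{theoremshort} holds for $F\in\SS$ ``by following the proof given in the preceding section,'' using the displayed explicit formula for $G_F$ in place of \eqref{explicit}. Your split at $|\g|=x^{\eps/2}$, the Taylor bound for small zeros, and the partial-summation reduction for large zeros are precisely the steps in the $\zeta$-case proof. The conclusion $G_F(x+y)-G_F(x)=o(x^{1/2}\log x)$ under \RHF\ plus Conjecture \ref{conj3F} is the correct output of this argument, and your observation that this falls short of the conjectured $o(x^{1/2}/\log\log x)$ by a factor of $\log x\cdot\log\log x$ is accurate.

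In short: there is no ``paper's proof'' to compare against, and you have correctly identified both that the statement is conjectural and what the natural conditional argument yields instead.
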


\begin{theorem}\label{theoremshortF}
Assume \RHF.
Conjecture \ref{primeshortF} implies Conjecture \ref{conj3F}, 
and hence Conjectures \ref{conj1F} and \ref{conj2F}.  Conversely,
if \RHF\, and Conjecture \ref{conj3F} holds, 
then for all fixed $\eps>0$, large $x$
and $y\le x^{1-\eps}$, 
$$
G_F(x+y) - G_F(x) = o(x^{\frac12} \log x).
$$
\end{theorem}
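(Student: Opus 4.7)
The plan is to mirror the proof of Theorem~\ref{theoremshort}, the only substantive modification being that the explicit formula in the Selberg class carries an error $O(x^{\theta_F}\log x)$ rather than the $O(\log x)$ available for $\zeta(s)$; this is exactly what dictates the precise lower bound on $x$ appearing in Conjecture~\ref{conj3F}.

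For the first implication (Conjecture~\ref{primeshortF} $\Rightarrow$ Conjecture~\ref{conj3F}), I would reuse the Beurling--Selberg construction from the proof of Theorem~\ref{theoremshort} verbatim, giving a smooth approximation $F$ to the indicator of $[0,1]$ whose Fourier transform is supported in $[-K,K]$ and which satisfies \eqref{Fest}. Splitting
\benn
\sum_{0<\g\le T}x^{i\g}=\sum_{|\g|\le x}x^{i\g}F(\g/T)+\sum_{|\g|\le x}x^{i\g}\bigl[I(\g/T)-F(\g/T)\bigr],
\eenn
the bounds \eqref{NFT} and \eqref{Fest} give $O(T\log T/K)$ for the second term. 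For the first, apply Fourier inversion, invoke \RHF\ to extract the factor $x^{-1/2}$, and integrate by parts in $v$ exactly as in the proof of Theorem~\ref{theoremshort}. Replacing the inner sum $\sum_{|\g|\le x}(xe^{2\pi v/T})^\rho/\rho$ by $-G_F(xe^{2\pi v/T})$ via the explicit formula stated preceding Conjecture~\ref{primeshortF} (taken with $Q=x\log x$) produces the main term
\benn
\frac{-T}{2\pi\sqrt{x}}\int_{-K}^K\hatF'(v)\bigl(G_F(xe^{2\pi v/T})-G_F(x)\bigr)\,dv
\eenn
together with a cumulative residual $O(Tx^{\theta_F-1/2}\log x\,\log K)$ arising from the $O(x^{\theta_F}\log x)$ slack in the explicit formula. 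Taking $K=\log^2 T$, this residual is $o(T)$ precisely when $x\gg T^{1/(1/2+\theta_F)}/\log^5 T$, which is the lower threshold in Conjecture~\ref{conj3F}. The increment $x(e^{2\pi v/T}-1)=O(xK/T)=O(x\log^2 T/T)$ satisfies $\le x^{1-\epsilon'}$ for any $A>1$, $x\le T^A$, and a sufficiently small $\epsilon'=\epsilon'(A)>0$; Conjecture~\ref{primeshortF} then gives $G_F(xe^{2\pi v/T})-G_F(x)=o(x^{1/2}/\log\log x)$ uniformly in $|v|\le K$, and the main term is $o(T)$.

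For the converse implication, apply the explicit formula of the excerpt at both $x+y$ and $x$ (with $Q=x\log x$) to obtain
\benn
G_F(x+y)-G_F(x)=-\sum_{|\g|\le x\log x}\frac{(x+y)^\rho-x^\rho}{\rho}+O\bigl(x^{\theta_F}\log x\bigr),
\eenn
and split the sum at $|\g|=x^{\epsilon/2}$. For small zeros, Taylor expand $(x+y)^\rho-x^\rho=x^\rho(\rho y/x+O(|\rho|^2 y^2/x^2))$ and use $N_F(x^{\epsilon/2})\ll x^{\epsilon/2}\log x$ from \eqref{NFT}; on \RHF\ this contributes $O(x^{1/2-\epsilon/2}\log x)$. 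For the large zeros, write $x^\rho/\rho=x^{1/2}\cdot x^{i\g}/(\tfrac12+i\g)$ and apply partial summation to each of $\sum_{x^{\epsilon/2}<\g\le x\log x}x^\rho/\rho$ and $\sum_{x^{\epsilon/2}<\g\le x\log x}(x+y)^\rho/\rho$, exactly as in the proof of Theorem~\ref{theoremshort}. Taking $A=2/\epsilon$, Conjecture~\ref{conj3F} gives $\sum_{0<\g\le t}x^{i\g}=o(t)$ for $x^{\epsilon/2}\le t\lesssim x^{1/2+\theta_F}$; Lemma~\ref{lem1} (combined with \RHF) handles the complementary range $t\gtrsim x^{1/2+\theta_F}\log x$ unconditionally. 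Each piece is therefore $o(x^{1/2}\log x)$, and since $\theta_F<\tfrac12$, the $O(x^{\theta_F}\log x)$ error is absorbed.

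The principal obstacle is the error accounting in the forward direction: one must verify that the $O(x^{\theta_F}\log x)$ explicit-formula error, once integrated against $\hatF'$ on $[-K,K]$ and weighted by the \RHF\ factor $T/\sqrt{x}$, remains $o(T)$ throughout the entire range $T^{1/(1/2+\theta_F)}/\log^5 T\le x\le T^A$. This is exactly what pins down the lower threshold in Conjecture~\ref{conj3F}: below that threshold, Lemma~\ref{lem1} already delivers $\sum_{0<\g\le T}x^{i\g}=o(T)$ unconditionally from \RHF\ alone, so the conjecture is needed only above it.
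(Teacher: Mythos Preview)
Your proposal is correct and follows exactly the paper's approach, which is simply to rerun the proof of Theorem~\ref{theoremshort} with the Selberg-class explicit formula in place of \eqref{explicit}. One small correction: the residual $O\bigl(Tx^{\theta_F-1/2}\log x\,\log K\bigr)$ is $o(T)$ for \emph{all} $x\to\infty$, since $\theta_F<\tfrac12$; it is not this term that forces the lower threshold $T^{1/(1/2+\theta_F)}/\log^5 T$ in Conjecture~\ref{conj3F}. The threshold arises for the reason you correctly state in your final paragraph---it is the boundary below which Lemma~\ref{lem1} under \RHF\ already yields \eqref{sumxgF}, so the conjecture asserts something new only above it.
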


In order to address an analog of Theorem \ref{theoremPC}, we first
quote a Pair Correlation Conjecture for $F$,
due to Murty and Perelli \cite{MP}.

\begin{conjecture}\label{conjPCF}
Define
$$
\FF_F(x,T) = \sum_{0<\g,\g'\le T}
\frac{4 x^{i(\g-\g')}}{4+(\g-\g')^2}
$$
and $\DD_F(x,T)=\FF_F(x,T)/N_F(T)$.  We have 
$\DD_F(T^{\th d_F},T) \sim \th$ for $0<\th\le 1$ and
$\DD(T^{\th d_F},T) \sim 1$ for $\th \ge 1$.
\end{conjecture}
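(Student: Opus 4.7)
The plan is to adapt Montgomery's pair-correlation method to the general Selberg class. Using Lemma \ref{lem1} (in a smoothed form), write
$$
\sum_{0<\g \le T} x^{i\g}\, w_T(\g) = -\sum_{n} \frac{\Lambda_F(n)}{\sqrt{n}}\, \widehat{w_T}\!\left(\tfrac{1}{2\pi}\log(n/x)\right) + \text{dual term} + E,
$$
with $E$ controlled under \RHF\ and $w_T$ a suitable smooth cut-off. Squaring the identity and integrating against a Fejér kernel exhibits $\FF_F(x,T)$ as a double sum over prime powers weighted by $\Lambda_F$; this is the Selberg-class analogue of Montgomery's explicit formula for the pair correlation function.

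For $x = T^{\th d_F}$ with $0 < \th \le 1$, the diagonal terms $n_1 = n_2$ should dominate the resulting double sum. A Chebyshev-type estimate for $\sum_{n \le y} \Lambda_F(n)^2$ (available in the Selberg class from Rankin--Selberg heuristics, or from axioms (iv)--(v) combined with Hypothesis $Z_F$) produces the predicted main term $\th N_F(T)$ after the $\log$-scale accounting implicit in \eqref{NFT}. The off-diagonal pairs $n_1 \ne n_2$ would be controlled by a Montgomery--Vaughan mean-value inequality applied to the Dirichlet polynomial $\sum \Lambda_F(n) n^{-1/2-it}$, whose second moment in the relevant range is of the right order.

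For $\th \ge 1$, the target $\DD_F(x,T) \sim 1$ is, by running the argument of Theorem \ref{theoremshortF} in reverse, essentially equivalent to $\sum_{0<\g \le T} x^{i\g} = o(N_F(T))$ uniformly for $T^{d_F} \le x \le T^A$. One would derive this from a Selberg-class prime number theorem in short intervals of the strength of Conjecture \ref{primeshortF}, via a contour-integral identity parallel to the explicit-formula manipulation used in the proof of Theorem \ref{theoremshort}. Equivalently, one can try to show directly that the non-diagonal contributions in Montgomery's identity are negligible in this range.

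The main obstacle is that the regime $\th \ge 1$ is out of reach of present methods even for $F=\zeta$: Montgomery's unconditional theorem only treats $0 < \th < 1$ (assuming RH), and the ``GUE range'' $\th \ge 1$ is comparable in depth to the conjecture itself. A genuine proof would therefore demand either a substantially new estimate for $\Lambda_F$ in short intervals, or a new device for bounding the off-diagonal contribution in the Selberg-class analogue of Montgomery's identity beyond what mean-value theorems deliver; either input would in particular resolve the long-standing open case $F=\zeta$.
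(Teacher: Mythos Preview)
The statement you are attempting to prove is \emph{Conjecture} \ref{conjPCF}, not a theorem: the paper does not prove it, nor does it claim to. It is quoted from Murty and Perelli \cite{MP} as a conjectural input, used only to motivate the stronger Conjecture \ref{conjPCF2} which is then \emph{assumed} in Theorem \ref{theoremPCF}. There is thus no ``paper's own proof'' to compare against.

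Your proposal is not a proof but a heuristic sketch, and you say as much in your final paragraph: the range $\th \ge 1$ is open even for $F=\zeta$ under RH, and the range $0<\th\le 1$ would need, among other things, a sharp asymptotic for $\sum_{n\le y}\Lambda_F(n)^2$ that is not available from the Selberg-class axioms alone (your parenthetical ``Rankin--Selberg heuristics'' is itself conjectural in this generality). So the genuine gap is simply that the statement is an open conjecture; your outline correctly identifies the standard obstructions but does not overcome any of them, and no argument in the paper does either.
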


Notice that, as a function of $x$, $\FF_F(x,T)$ is conjectured to
undergo a change of behavior in
the vicinity of $x=T^{d_F}$.  In order to deduce Conjecture
\ref{conj3F}, we can postulate a stronger version of Conjecture \ref{conjPCF},
with error terms of relative order $o(1/\log^2 T)$.  
We succeed, as in the proof of Theorem \ref{theoremPC}, when $d_F=1$.
When $d_F \ge 2$, however, this transition zone lies outside
the range in which Lemma \ref{lem1} is useful (Kaczorowski and Perelli
recently proved that $1<d_F<2$ is impossible \cite{KP2}; it is conjectured that
$d_F$ is always an integer).  We can use
an analog of Lemma \ref{lem1}, which follows by the same method (replace
$\DD(x,T)$ with $\DD_F(x,T)$).  However, in order to prove the right side is
small, we require that $\DD_F(x,T)$ has small \emph{variation}, even
through the transition zone $x\approx T^{d_F}$.
Tsz Ho Chan \cite{Ch} studied the
behavior of $\DD(x,T)$ (for $\zeta(s)$) in the vicinity of $x=T$
assuming RH plus a quantitative version of the twin prime conjecture
with strong error term.  His analysis leads to a pair correlation
conjecture with $\DD(x,T)$ smoothly varying through the transition zone.
We conjecture that the same holds for other $F\in \SS$.

\begin{conjecture}\label{conjPCF2} 
For $F\in \SS$, $\DD_F(x,T)\ll 1$ uniformly in $x$ and $T$, 
and for any $A>0$ there is a $c>0$ so that
$$
| \DD_F(x+\delta x,T) + \DD_F(x-\delta x,T) - 2 \DD_F(x,T)| = o (T/\log T)
$$
uniformly for $T \le x\le T^A$ and $0 \le \delta \le (\log T)^{c-1}$.
\end{conjecture}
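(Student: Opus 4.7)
The plan is to approach Conjecture~\ref{conjPCF2} in two stages: first establish the uniform boundedness $\DD_F(x,T)\ll 1$, then extract the second-difference smoothness estimate by writing the relevant expression as a weighted double sum whose kernel vanishes to second order at $\g=\g'$.

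For the boundedness, I would follow Montgomery's original treatment, adapted to the Selberg class along the lines of Murty--Perelli. Starting from
$$
\FF_F(x,T)=\sum_{0<\g,\g'\le T}\frac{4\,x^{i(\g-\g')}}{4+(\g-\g')^2},
$$
and applying an explicit-formula argument of the type used to prove Lemma~\ref{lem1} (valid under \RHF\ together with the growth condition $\Lambda_F(n)\ll n^{\theta_F}$), one rewrites $\FF_F$ in terms of a prime-power sum weighted by $\Lambda_F(n)^2/n$. The diagonal $n=n'$ contribution supplies an $N_F(T)$-sized main term, while the off-diagonal terms are controlled by Landau-type oscillation estimates. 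This should yield $\FF_F(x,T)\ll N_F(T)$ uniformly for $T\le x\le T^A$.

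For the second-difference bound, I would expand
$$
\DD_F(x+\delta x,T)+\DD_F(x-\delta x,T)-2\DD_F(x,T)=\frac{1}{N_F(T)}\sum_{\g,\g'}\frac{4\,x^{i(\g-\g')}\,K_\delta(\g-\g')}{4+(\g-\g')^2},
$$
where $K_\delta(t)=(1+\delta)^{it}+(1-\delta)^{it}-2$ satisfies $K_\delta(t)=-\delta^2 t^2+O((\delta t)^4)$ for $|\delta t|\le 1$ and $|K_\delta(t)|\le 4$ always. Splitting the double sum at $|\g-\g'|=1/\delta$: in the small-spacing regime the factor $K_\delta(\g-\g')/(4+(\g-\g')^2)$ is $O(\delta^2)$ and is absorbed using the close-pair density estimates from the first stage; in the tail the rapid decay of $1/(4+t^2)$ together with the restriction $\delta\le(\log T)^{c-1}$ should provide sufficient room. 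In both ranges one then reduces to explicit oscillatory sums over prime powers using Lemma~\ref{lem1} and controls the off-diagonal contribution.

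The main obstacle is that both stages presuppose pair-correlation information strictly stronger than anything currently available for general $F\in\SS$: explicit formulas such as Lemma~\ref{lem1} lose their bite once $x\gg T^{1/(1/2+\theta_F)}$, so for $F$ of large degree the transition zone $x\approx T^{d_F}$ falls outside the regime where primes directly control zeros. Following Chan's analysis for $\zeta(s)$, I expect one would need a quantitative twin-prime--type assumption on $\Lambda_F$, controlling $\sum_{n\le x}\Lambda_F(n)\overline{\Lambda_F(n+h)}$ uniformly in $h$, in order to propagate smoothness of $\DD_F$ across the transition. This correlation input is precisely where the claim becomes genuinely conjectural, and it is the step I would expect to be the most resistant to current techniques.
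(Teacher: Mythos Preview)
The statement you are attempting to prove is a \emph{conjecture}, and the paper does not prove it. There is no ``paper's own proof'' to compare your proposal against. The paper's entire support for Conjecture~\ref{conjPCF2} consists of an analogy: Chan's analysis of $\DD(x,T)$ for $\zeta(s)$, carried out under RH together with a strong quantitative twin-prime conjecture, suggests that $\DD$ varies smoothly through the transition zone $x\approx T$, and the authors simply conjecture that the same smoothness persists for general $F\in\SS$. The conjecture is then used as a \emph{hypothesis} in Theorem~\ref{theoremPCF}, not as a conclusion.

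To your credit, your final paragraph recognizes exactly this: you identify that a quantitative correlation hypothesis on $\sum_{n\le x}\Lambda_F(n)\overline{\Lambda_F(n+h)}$ would be needed, and that this is where the statement becomes genuinely conjectural. That diagnosis matches the paper's own motivation. But the earlier parts of your proposal overclaim. Your ``first stage'' asserts that an explicit-formula argument \`a la Murty--Perelli yields $\FF_F(x,T)\ll N_F(T)$ uniformly for $T\le x\le T^A$; in fact their method (like Montgomery's) only controls $\FF_F$ in the range $x\le T^{d_F}$, and the uniform bound beyond that range is itself part of what is being conjectured. Likewise, your second-difference decomposition is a reasonable heuristic shape, but the ``close-pair density estimates from the first stage'' you invoke are not available, and the reduction to Lemma~\ref{lem1} fails precisely in the range $x\gg T^{1/(1/2+\theta_F)}$ where the conjecture has content, as you yourself note.

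In short: there is no gap to identify in a non-existent proof, and your own analysis already explains why the statement is posed as a conjecture rather than a theorem. The appropriate response here is not a proof sketch but a remark that the paper offers no proof, only heuristic motivation via Chan's work.
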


Following the proof of Theorem \ref{theoremPC} (take $\beta=\log T
\log\log T$, for example), we arrive at the following.

\begin{theorem}\label{theoremPCF}
Assume \RHF.  Then Conjecture \ref{conjPCF2} implies 
Conjecture \ref{conj3F}, and 
therefore also Conjectures \ref{conj1F} and \ref{conj2F}. 
\end{theorem}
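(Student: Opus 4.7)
The plan is to follow the proof of Theorem \ref{theoremPC} essentially verbatim, replacing $\DD$ and $\FF$ throughout by $\DD_F$ and $\FF_F$, and then invoke Theorem \ref{123F} to pass from Conjecture \ref{conj3F} to Conjectures \ref{conj1F} and \ref{conj2F}. The first step is to establish an analog of Lemma \ref{PCY} for $F\in\SS$: assuming \RHF, for $T\ge 10$ and $1\le \b\le T/(2\log T)$,
\begin{align*}
\sum_{0<\g\le T} x^{i\g} &\ll \frac{T(\log T)^{1/2}}{\b^{1/2}}\Bigl(1+\max_{\tfrac{T}{\b\log T}\le t\le T}|\DD_F(x,t)|\Bigr)^{1/2} \\
&\quad + T(\b\log T)^{1/2}\Bigl(\max_{t,u}\,|\DD_F(xe^u,t)+\DD_F(xe^{-u},t)-2\DD_F(x,t)|\Bigr)^{1/2},
\end{align*}
where the inner maximum is over $t\in[T/(\b\log T),T]$ and $u\in[0,\b^{-1}\log(\b\log T)]$. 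The derivation is parallel to Lemma \ref{PCY}: introduce the smoothed pair correlation $G_{F,\b}(x,T)=\sum_{0<\g,\g'\le T} 4\b^2 x^{i(\g-\g')}/(4\b^2+(\g-\g')^2)$, apply Lemma 1 of \cite{GH} (whose proof depends only on $N_F(T)\ll T\log T$ and general pair-sum manipulations, so carries over to any $F\in\SS$ under \RHF), and split $G_{F,\b}$ via the identity $G_{F,\b}(x,t)=\FF_F(x,t)+\b(1-\b^2)\int_{-\infty}^\infty(\FF_F(xe^u,t)-\FF_F(x,t))e^{-2\b|u|}\,du$, symmetrizing the integrand under $u\mapsto -u$ and using the crude bound $\DD_F(\cdot,t)\ll \log t$ for $|u|$ beyond $\b^{-1}\log(\b\log T)$.

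With the lemma in hand, feed in Conjecture \ref{conjPCF2}. The pointwise bound $\DD_F(x,t)\ll 1$ controls the first term by $O(T(\log T/\b)^{1/2})$, which is $o(T)$ whenever $\b/\log T\to\infty$. For the second term, take $\b=\log T\log\log T$ as suggested; then $u$ ranges over $[0,O(1/\log T)]$, so $xe^{\pm u}=x(1\pm u+O(u^2))$ and the symmetric difference inside the maximum is precisely the quantity controlled by Conjecture \ref{conjPCF2} at scale $\delta=u=O(1/\log T)$, which lies comfortably inside the admissible range $\delta\le(\log T)^{c-1}$. The quantitative smallness hypothesized on that second difference then forces the second term to be $o(T)$ as well, and we obtain Conjecture \ref{conj3F}. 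An application of Theorem \ref{123F} delivers Conjectures \ref{conj1F} and \ref{conj2F}.

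The main obstacle is the transition zone $x\asymp T^{d_F}$: when $d_F\ge 2$ this zone lies interior to the range $T^{1/(1/2+\th_F)}/\log^5 T\le x\le T^A$ relevant to Conjecture \ref{conj3F}, and $\DD_F(x,t)$ genuinely transitions from $\sim\th$ to $\sim 1$ across $x=T^{\th d_F}$. Unlike the $\zeta$-case, there is no pointwise asymptotic $\DD_F(x,t)=1+o(1/\log^2 T)$ to exploit, and this is exactly why Conjecture \ref{conjPCF2} must be phrased as a smoothness condition on $\DD_F$ rather than a pointwise one. The symmetric second difference is the natural object to bound because it is precisely what survives in the integral representation of $G_{F,\b}$ after symmetrizing in $u\mapsto -u$, and it can remain small even as $\DD_F$ crosses its transition, provided the crossing is smooth on the logarithmic scale $1/\b$. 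Verifying that the exponential cutoff in the integral restricts $u$ to a range where the smoothness hypothesis applies is routine, given the flexibility in choosing the constant $c$.
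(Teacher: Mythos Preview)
Your proposal is correct and follows essentially the same approach as the paper: the paper's entire argument is the remark ``Following the proof of Theorem \ref{theoremPC} (take $\beta=\log T\log\log T$, for example),'' together with the preceding discussion that an analog of Lemma \ref{PCY} holds for $F$ and that the symmetric second difference of $\DD_F$ is what must be controlled through the transition zone. Your write-up simply spells out these steps in more detail, including the same choice of $\beta$ and the same reasoning for why Conjecture \ref{conjPCF2} is phrased as a smoothness condition rather than a pointwise asymptotic.
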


\bigskip
{\bf Acknowledgement.}  The authors thank the referee for carefully
 reading the paper and for pointing out several misprints and minor 
errors.



\begin{thebibliography}{100}



\bibitem{Ch} T. H. Chan, {\it More precise pair correlation conjecture 
on the zeros of the Riemann zeta function},
Acta Arith. {\bf 114} (2004), no. 3, 199--214.

\bibitem{Da} H. Davenport, {\it Multiplicative Number Theory}, 3rd
  ed., Springer-Verlag, 2000.


\bibitem{FZ} K. Ford and A. Zaharescu, {\it On the distribution
of imaginary parts of zeros of the Riemann zeta function},
J. reine angew. Math. {\bf 579} (2005), 145--158.

\bibitem{F76} A. Fujii, {\it On the zeros of Dirichlet $L$-functions,
  III}, Trans. Amer. Math. Soc. {\bf 219} (1976), 347--349.


\bibitem{GM} P. X. Gallagher and J. H. Mueller, {\it Primes and zeros
  in short intervals}, J. reine angew. Math. {\bf 303/304} (1978),
  205--220. 

\bibitem{GH} D. A. Goldston and D. R. Heath-Brown, {\it 
A note on the differences between consecutive primes}, Math. Ann.
{\bf 266} (1984), 317--320.
 
\bibitem{Go} S. M. Gonek, {\it An explicit formula of Landau and
its applications to the theory of the zeta-function,}
A tribute to Emil Grosswald: number theory and related analysis,
395--413, Contemp. Math., 143, Amer. Math. Soc., Providence,
RI, 1993.

\bibitem{HB} D. R. Heath-Brown, {\it Gaps between primes, and the pair
  correlation of zeros of the zeta-function}, Acta Arith. {\bf 41}
  (1982), 85--99.

\bibitem{Hl} E. Hlawka, {\it \"Uber die Gleichverteilung gewisser
Folgen, welche mit den Nullstellen der Zetafunktionen zusammenh\"angen,}
Sitzungsber. \"Osterr. Akad. Wiss., Math.--Naturnw. Kl. Abt. II
{\bf 184} (1975), 459--471.


\bibitem{KP} J. Kaczorowski and A. Perelli, {\it The Selberg class:
a survey}, Number Theory in Progress, vol. II, de Gruyter, Berlin
  (1999), 953--992.

\bibitem{KP2}  J. Kaczorowski and A. Perelli, {\it Nonexistence of
  $L$-functions of degree $1<d<2$}, preprint.

\bibitem{La} E. Landau. {\it \"Uber die Nullstellen der $\zeta$-Funktion},
Math. Ann. {\bf 71} (1911), 548--568.

\bibitem{Luo} W. Luo, {\it Zeros of Hecke $L$-functions associated with
  cusp forms.}, Acta Arith. {\bf 71} (1995), no. 2, 139--158.


\bibitem{M1} H. L. Montgomery, {\it The pair correlation of zeros of
the zeta function,} Proc. Sym. Pure Math. {\bf 24} 
(1973), 181--193.

\bibitem{M2} H. L. Montgomery, {\it Ten lectures on the interface
    between analytic number theory and harmonic analysis}. 
CBMS Regional Conference Series in Mathematics, 84. 
American Mathematical Society, Providence, RI, 1994. xiv+220 pp.

\bibitem{MS} H. L. Montgomery and K. Soundararajan, {\it Primes in
    short intervals}, Commun. Math. Phys. {\bf 252} (2004), 589--617.

\bibitem{Mue} J. H. Mueller, {\it On the difference between
    consecutive primes}, Recent progress in analytic number theory, I,
    pp. 269--273.  London, New York: Academic Press 1981.

\bibitem{MP} M. R. Murty, A. Perelli, {\it The Pair Correlation of Zeros 
of Functions in the Selberg Class}, Int. Math. Res. Not. (1999) 
No. {\bf 10}, 531--545.

\bibitem{MZ} M. R. Murty, A. Zaharescu, {\it Explicit formulas for the 
pair correlation of zeros of functions in the Selberg class},
Forum Math. {\bf 14} (2002), no. 1, 65--83. 


\bibitem{S1} A. Selberg, {\it  Contributions to the theory of the 
Riemann zeta-function},  Arch. Math. Naturvid. {\bf 48} (1946),
  89--155; Collected papers, vol. I, 214--280, Springer, Berlin 1989.

\bibitem{S2}
A. Selberg, {\it Contributions to the theory of Dirichlet's $L$-functions},
Skr. Norske Vid. Akad. Oslo. I. {\bf 1946}, (1946), no. 3, 62 pp.;
 Collected papers, vol. I, 281--340, Springer, Berlin 1989.

\bibitem{S3} A. Selberg, {\it Old and new conjectures and results
 about a class of Dirichlet series}, Proceedings of the Amalfi
 Conference on Analytic Number Theory (Maiori, 1989), Univ. Salerno,
 (1992), 367--385; 
 Collected papers, vol. II, 47--63, Springer, Berlin 1989.

\bibitem{V} J. Vaaler, {\it Some extremal functions in Fourier analysis},
Bull. Amer. Math. Soc. (N.S.)  {\bf 12}  (1985),  no. 2, 183--216.


\end{thebibliography}
\end{document}